\crefname{equation}{}{}
\Crefname{equation}{}{}
\newtheoremstyle{mythmstyle}
  {8 pt} 
  {3 pt} 
  {} 
  {} 
  {\bfseries} 
  {.} 
  {.5em} 
  {} 
\theoremstyle{plain}
\def\thm@space@setup{%
  \thm@preskip=6pt plus 1pt minus 1pt
  \thm@postskip=\thm@preskip 
}
\newtheorem{theorem}{Theorem}[section]
\newtheorem{lemma}[theorem]{Lemma}
\newtheorem{conjecture}[theorem]{Conjecture}
\newtheorem{remark}{Remark}
\newtheorem*{example*}{Example}
\newtheorem{definition}{Definition}
\newtheorem*{definition*}{Definition}
\newtheorem{assumption}{Assumption}
\newtheorem*{remark*}{Remark}
\crefname{definition}{\textbf{definition}}{definitions}
\Crefname{definition}{Definition}{Definitions}
\crefname{assumption}{\textbf{assumption}}{assumptions}
\Crefname{assumption}{Assumption}{Assumptions}
\newcommand{\bsigma}{\boldsymbol{\sigma}}
\newcommand{\btau}{\boldsymbol{\tau}}
\newcommand{\Mod}[1]{\ (\mathrm{mod}\ #1)}
\begin{document}
\allowdisplaybreaks
\title{Computational and Statistical Thresholds in Multi-layer Stochastic Block Models}

\author[1]{Jing Lei}
\affil[1]{Carnegie Mellon University}

\author[2]{Anru R. Zhang}
\author[2]{Zihan Zhu}
\affil[2]{Duke University}

\maketitle

\begin{abstract}
  We study the problem of community recovery and detection in multi-layer stochastic block models, focusing on the critical network density threshold for consistent community structure inference.  Using a prototypical two-block model, we reveal a computational barrier for such multi-layer stochastic block models that does not exist for its single-layer counterpart: When there are no computational constraints, the density threshold depends linearly on the number of layers. However, when restricted to polynomial-time algorithms, the density threshold scales with the square root of the number of layers, assuming correctness of a low-degree polynomial hardness conjecture.  Our results provide a nearly complete picture of the optimal inference in multiple-layer stochastic block models and partially settle the open question in \cite{lei2022bias} regarding the optimality of the bias-adjusted spectral method.
\end{abstract}


\section{Introduction}
A network records the pairwise interactions among a group of individuals.  Unlike traditional data that records attributes of individuals, each entry in a network data measurement is indexed by a pair of individuals.  A network data on a group of $n$ individuals is often represented by an $n\times n$ matrix, where the $(i,j)$th entry records the interaction between the individuals $i$ and $j$.  Network data has wide applications in social and nature sciences, and has led to a rich collection of methodological and theoretical developments in statistics and machine learning.  We refer to the books \cite{Goldenberg10,izenman2023network,Kolaczyk09,Newman09} for further readings on general network data analysis.

In recent years, the advancement of measurement technology and data storage capability allows scientists to record multiple networks on the same group of individuals. Such multi-layer networks often reveal more insights about the underlying structure.  For example, \cite{TangLD09} used multi-layer network to analyze the connectivity patterns between research articles using interaction measured in different modes, including title, abstract, and keywords; \cite{DongFVN12} considered cellphone networks defined through tower proximity, blue-tooth proximity, and calls; in \cite{lei2020consistent,lei2022bias,liu2018global}, multiple gene co-expression networks are constructed at different developmental periods.  Other examples include social science \citep{XuH14}, bioinformatics \cite{ZhangC17}, and neural imaging \citep{paul2020random}.  See \cite{KivelaABGMP14} for a general introduction to multi-layer network data.

The emergence of multi-layer network data has inspired many corresponding developments in statistical methods and theory.  As the multi-layer extension of the famous stochastic block model \citep[SBM][]{Holland83}, the multi-layer stochastic block model (MLSBM) is a very popular model for dynamic or multi-layer network data due to its natural ability to model and recover latent communities, a clustering structure that is expected to exist in many real-world networks.  A main challenge in working with multi-layer networks is that the data is now a three-way tensor, where, unlike for matrix-valued single-layer network data, spectral methods are neither easy to compute nor guaranteed to be consistent.  Existing methods deal with the tensor data by either aggregating over the layers \citep{HanXA15,PaulC17,levin2022recovering} or partially vectorizing the tensor to reduce to a matrix form \citep{pensky2019dynamic}.  Likelihood and least-squares-based estimators are also considered but may face computational challenges in the worst-case scenario \citep{MatiasM17,lei2020consistent}.  Recently, \cite{chen2022global} considered a two-stage method that uses likelihood to refine an initial spectral estimate obtained from aggregated layers.

In this work, we aim to provide insights into the understanding of the fundamental limits of community detection and estimation in multi-layer stochastic block models. In single-layer SBMs, it is well known that if the community sizes are balanced, and the edge probabilities are of the same order of magnitude, then consistent recovery of communities is possible if and only if the average degrees diverge as the number of nodes increases, and such recovery can be achieved using variants of spectral methods, which are computationally feasible \citep{AbbeS15,Gao15,LeiR14,ZhangZ16}.  When it comes to multi-layer SBMs, the picture is less clear.  When the number of layers diverges as the number of nodes, the key question is to understand how the signal accumulates over the layers.  In the simplified setting of balanced community sizes and same-order edge probabilities, some recent results suggest that the signal accumulates linearly in the number of layers, and consistent community recovery is possible if and only if the expected total degrees across all layers diverge as numbers of nodes and layers increase \citep{chen2022global,PaulC17}. However, these results require that the layers are all assortative mixing, i.e., nodes in the same community are more likely to connect than nodes from different communities.  There are reasons to believe the assortative mixing assumption may make the problem easier as it provides a simple way to aggregate over the layers, as explained in \cite{lei2020consistent,lei2022bias}.  Without the assortativity assumption, the best known upper bound results suggest that the signal accumulates proportional to the square root of, instead of linearly in, the number of the layers \cite{lei2022bias}.

The main contribution of this work is a characterization of the network density threshold required for consistent community recovery and detection for general multi-layer SBMs.  Interestingly, we find that both the linear and square-root signal accumulation rates are correct, in the sense that the linear rate corresponds to the information-theoretic threshold, whereas the square-root rate corresponds to the computational threshold when restricted to polynomial-time algorithms assuming a hardness conjecture of low-degree polynomials \citep[LDLR,][]{kunisky2019notes}.  Such a computational gap not only reconciles existing results on this topic, it also provides some insights into the source of hardness of inference for multi-layer SBMs.  The proofs of the main results reveal that the computational hardness comes mainly from the unknown \emph{layer identity}. The term ``layer identity'' can be thought of as identifying each layer as assortative mixing or disassortative mixing (the opposite of assortative mixing).  If we are provided with this knowledge, then the signal accumulates linearly over the layers, otherwise, there is a computational cost in finding the best way to aggregate information across the layers.  See \cref{sec:prelim} for a more precise description.

\subsection*{Related work}  There is no gap between computational and information-theoretic thresholds for single-layer SBMs.  Instead, the multi-layer network is more like low-rank tensor data, where similar gaps have been established for certain models, including sparse principal components analysis \citep{berthet2013optimal}, sparse submatrix recovery \citep{ma2015computational}, and tensor SVD \citep{zhang2018tensor}. These computational lower bound results all rely on reducing the statistical inference problem to a conjectured computationally hard problem, usually the planted-clique problem.  A unique challenge posed by the multi-layer SBM model is that the data entries are sparse Bernoulli, which is hard to convert to the dense Bernoulli entries as required in the planted clique model.  Our theoretical development bypasses this difficulty by considering an alternative framework for computational hardness, namely the ``low-degree polynomial'' framework \citep{kunisky2019notes}.  The low-degree polynomial framework grew out of the Sum-of-Squares (SoS) hierarchy \citep{raghavendra2018high}, where the hardness of a problem is gauged by the performance of an increasingly more powerful hierarchy of semidefinite relaxations of polynomial optimization problems.  Roughly speaking, we know that the likelihood ratio is most powerful in distinguishing MLSBMs with different community structures, then it is reasonable to conjecture computational hardness if the best low-degree polynomial approximation to the likelihood ratio cannot distinguish these two models.  See \cite{schramm2022computational} for recent developments of the low-degree polynomial framework in estimation and \cite{luo2023computational} for an application in single-layer graphon estimation.


\section{Problem formulation and asymptotic regimes}\label{sec:prelim}
A multi-layer SBM is a probabilistic model for a collection of random graphs observed on a common set of nodes.  For a pair of positive integers $(n,T)$, let $[n]=\{1,2,...,n\}$ be the set of nodes, and $[T]$ the set of layers.  A multi-layer SBM with $K$ common communities generates independent Bernoulli random variables
\begin{equation}
A_{t}(i,j)\stackrel{\rm indep.}{\sim} {\rm Bernoulli}(B_{t}(\sigma_i,\sigma_j))\,,~~1\le i<j\le n\,,~1 \le t\le T\,,\label{eq:MLSBM}
\end{equation}
where
$\bsigma=(\sigma(i):i\in[n])\in[K]^n$ is a community membership vector, with $\sigma(i)\in[K]$ denoting the membership of node $i$, and $B_t\in[0,1]^{K\times K}$ is a symmetric matrix specifying the edge probability between the communities in the $t$th layer.

The observed data are $T$ symmetric binary adjacency matrices $\mathbf A=(A_t:t\in[T])$.  In the theory and practice related to stochastic block models, it is of interest to infer about the membership vector $\bsigma$ from $\mathbf A$.

The hardness of inference problems related to $\bsigma$ depends on $n$, $T$, $\mathbf B=(B_t:t\in[T])$, and $\bsigma$.  In this work we focus on the interplay between three quantities: the number of nodes $n$, the number of layers $T$, and the overall network density.  Here the network density is reflected by the overall magnitude of the entries in $\mathbf B$.

To facilitate our discussion and simplify presentation, we focus on the case $K=2$ and assume both $n$ and $T$ are even.
Let $\mathcal S_n=\{\bsigma\in\{0,1\}^n:\sum_{i\in[n]}\sigma(i)=n/2\}$ be the set of all balanced membership vectors $\bsigma$ on $n$ nodes.  Let $\rho\in(0,2/3)$ be a overall network density parameter.  Define
\begin{equation}
B^{(0)}=\begin{bmatrix}
	\frac{3}{2}\rho & \frac{1}{2}\rho\\
	\frac{1}{2}\rho & \frac{3}{2}\rho
\end{bmatrix}\,,\quad B^{(1)}=\begin{bmatrix}
	\frac{1}{2}\rho & \frac{3}{2}\rho\\
	\frac{3}{2}\rho & \frac{1}{2}\rho
\end{bmatrix}\,.\label{eq:B(1)B(2)}
\end{equation}

\begin{definition}\label{def:model_sequence}
Given the triplet $(n,T,\rho)$, we consider the following two MLSBM models.
\begin{enumerate}
	\item The balanced two-community model $P_{1,n}=P_1(n,T,\rho)$:
\begin{enumerate}
\item $\bsigma\sim {\rm Uniform}(\mathcal S_n)$;
\item $\btau\sim {\rm Uniform}(\mathcal S_T)$;
\item $B_t=B^{(\tau_t)}$ for each $t\in[T]$, with $B^{(0)}$, $B^{(1)}$ given in \eqref{eq:B(1)B(2)};
\item Generate $\mathbf A$ according to \eqref{eq:MLSBM}.
\end{enumerate}
\item The null model $P_{0,n}=P_0(n,T,\rho)$: $$A_t(i,j)\stackrel{\rm indep.}{\sim}{\rm Bernoulli}(\rho)$$ for all $1\le i<j\le n$, $t\in[T]$.
\end{enumerate}	
\end{definition}

We now discuss the rationale behind these choices of mixtures and priors.  First, the randomization on $\bsigma$ is a natural step in establishing minimax lower bounds for estimating $\bsigma$ or detecting the existence of community structure.  Given an arbitrary estimator, we can compute the Bayes risk under this prior in order to provide a minimax lower bound. Such a uniform prior on $\bsigma$ has been used in minimax theory for single layer SBMs \citep{ZhangZ16}.   Second, the randomization of $\btau$ is crucial in distinguishing the information-theoretic threshold and the computational threshold. It turns out that not knowing the layer identity, e.g., the value of $\tau(t)$ for each $t$, can make the inference about $\bsigma$ computationally hard but information-theoretically easy in certain regimes.  These two priors can be changed to the corresponding Bernoulli models (i.e., $\sigma(i)\stackrel{\rm iid}{\sim}{\rm Bernoulli}(1/2)$) with additional bookkeeping. Finally, we include the ``null model'' $P_{0,n}$ in order to consider the detection problem: Whether it is possible to distinguish $P_{1,n}$ from $P_{0,n}$.  The low-degree polynomial framework can be applied to such detection problems hence providing a lower bound for the estimation problem.  Below we will formally introduce the estimation problem and detection problem.

\subsection*{Asymptotic community recovery and detection}  We are interested in the asymptotic behavior of estimators when $n\rightarrow\infty$, $T\rightarrow\infty$, and $\rho\rightarrow 0$.  It is perhaps the easiest to relate $L$ and $\rho$ to $n$ by considering sequences $(T_n,\rho_n)_{n=1}^\infty$, which makes it more natural to write the corresponding mixture model sequence $P_{1,n}=P_{1}(n,T_n,\rho_n)$, and $P_{0,n}=P_0(n,T_n,\rho_n)$ as in \Cref{def:model_sequence}. In the rest of this paper, we will use $P_{1,n}$ to denote the joint distribution of $(\mathbf A,\bsigma,\btau)$. Now we can define the recoverability and distingshuishability of such model sequences.
\begin{definition}[Recoverable and distinguishable model sequences]\label{def:recoverable_detectable}
~
\begin{enumerate}
	\item  For a sequence $(T_n,\rho_n)_{n=1}^\infty$, we say the corresponding MLSBM sequence $P_{1,n}$ defined in \Cref{def:model_sequence} is \emph{recoverable} if there exists a sequence of estimators $\hat\bsigma(\mathbf A)\in\mathcal S_n$ such that
	$$
P_{1,n}(\ell_n(\hat\bsigma,\bsigma)\ge \epsilon)\rightarrow 0
	$$
	for any positive constant $\epsilon$, where $$\ell_n(\hat\bsigma,\bsigma)=n^{-1} \min\left\{d_{\rm Ham}(\hat\bsigma,\bsigma)\,,~d_{\rm Ham}(\hat\bsigma,1-\bsigma)\right\}$$ is the normalized Hamming distance between $\hat\bsigma$ and $\bsigma$ up to label permutation.
	\item For a sequence $(T_n,\rho_n)_{n=1}^\infty$, we say the corresponding MLSBM sequence $(P_{1,n})$ defined in \Cref{def:model_sequence} is \emph{distinguishable} from $P_{0,n}$ if there exists a sequence of  $\hat\psi$: $\hat\psi(\mathbf A)\in\{0,1\}$ such that
	$$
P_{1,n}(\hat\psi(\mathbf A)=0)+P_{0,n}(\hat\psi(\mathbf A)=1)\rightarrow 0\,.
	$$
\end{enumerate}
\end{definition}

Intuitively, detection should be easier than recovery.  We make this formal in the following lemma.
\begin{lemma}[Detection implies recovery]\label{lem:recover=>detect}
Under the MLSBM specified in \Cref{def:model_sequence},	if $P_{1,n}$ is asymptotically recoverable for a sequence $(T_n,\rho_n)$, then it is distinguishable for $(T_n+2,\rho_n)$ with polynomial additional computing time.
\end{lemma}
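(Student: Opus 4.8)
The plan is a sample-splitting reduction: run the assumed recovery procedure on the first $T_n$ layers, and use the two extra layers to test whether the output is meaningful. Concretely, given $\mathbf A=(A_1,\dots,A_{T_n+2})$, compute $\hat\bsigma=\hat\bsigma(A_1,\dots,A_{T_n})\in\mathcal S_n$ from the recovery estimator, set $u=2\hat\bsigma-\mathbf 1\in\{\pm1\}^n$ (note $\mathbf 1^\top u=0$ since $\hat\bsigma$ is balanced, so no centering is needed), form $M_t=u^\top A_t u$ for $t\in\{T_n+1,T_n+2\}$, and declare $\hat\psi(\mathbf A)=\mathbbm 1\{\max(|M_{T_n+1}|,|M_{T_n+2}|)>c\,n^2\rho_n\}$ for a suitable small constant $c$. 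The guiding principle is: under $P_{1,n}$ each held-out layer is a genuine balanced two-block SBM layer (assortative or disassortative), so once $\hat\bsigma$ lies within $\epsilon n$ of $\bsigma$ up to relabeling, the within-minus-between edge surplus of that layer forces $|M_t|\asymp n^2\rho_n$, with a sign depending on the layer's type but a magnitude that does not; whereas under $P_{0,n}$ the held-out layers are pure $\mathrm{Bernoulli}(\rho_n)$ noise independent of $\hat\bsigma$, so $M_t$ is a sum of $\Theta(n^2)$ independent $\{0,\pm1\}$-bounded terms with mean $O(n\rho_n)$ and total variance $\Theta(n^2\rho_n)$, hence $|M_t|=O(n\sqrt{\rho_n\log n})$ with high probability. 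The overhead is one extra call to $\hat\bsigma$ plus $O(n^2)$ arithmetic, i.e.\ polynomial additional computing time; the shift by $2$ keeps both layer counts even, as required for $\mathcal S_{T_n}$ and $\mathcal S_{T_n+2}$.

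The steps I would carry out are: (i) reduce to the Bernoulli-prior versions of the two models via the interchange noted after \Cref{def:model_sequence}; under the Bernoulli prior the layers are i.i.d.\ given $\bsigma$, so the first $T_n$ layers of $P_1(n,T_n+2,\rho_n)$ are \emph{exactly} the $T_n$-layer model and the recoverability hypothesis applies verbatim, while $A_{T_n+1},A_{T_n+2}$ are two further independent layers. (ii) On the high-probability event $\{\ell_n(\hat\bsigma,\bsigma)\le\epsilon\}$, writing $v=2\bsigma-\mathbf 1$, compute $\mathbb E[M_t\mid\hat\bsigma,\bsigma,\tau_t]=(-1)^{\tau_t}\tfrac{\rho_n}2(u^\top v)^2+O(n\rho_n)$ and use $(u^\top v)^2\ge n^2(1-4\epsilon)^2$ to get $|\mathbb E[M_t\mid\cdots]|\ge\tfrac{n^2\rho_n}{2}(1-O(\epsilon))$ for \emph{both} $\tau_t\in\{0,1\}$; then add a Bernstein tail bound for $M_t-\mathbb E[M_t\mid\cdots]$ (variance $O(n^2\rho_n)$) to obtain $|M_t|>c\,n^2\rho_n$ with probability $\to1$, hence $P_{1,n}(\hat\psi=0)\to0$. (iii) Bound $M_t$ under $P_{0,n}$ by Bernstein's inequality conditionally on $\hat\bsigma$ (which depends only on the first $T_n$ layers and is therefore independent of $A_{T_n+1},A_{T_n+2}$), giving $P_{0,n}(|M_t|>c\,n^2\rho_n)\to0$ and hence $P_{0,n}(\hat\psi=1)\to0$.

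The step that needs genuine care, rather than routine bookkeeping, is the separation in (ii)–(iii): it requires $n^2\rho_n\to\infty$, so that the $\asymp n^2\rho_n$ signal of a held-out layer dominates its $O(n\sqrt{\rho_n\log n})$ fluctuation, and with only two held-out layers there is no way to manufacture more signal. A layer-wise information inequality, $I(\bsigma;\mathbf A)\le T_n\,I(\bsigma;A_1)\le T_n\binom n2\max_{p\in\{\rho_n/2,\,3\rho_n/2\}}d_{\mathrm{kl}}(p\,\|\,\rho_n)=O(T_nn^2\rho_n)$, combined with the Fano-type requirement $I(\bsigma;\mathbf A)\gtrsim n$ for vanishing normalized Hamming error, only yields $\liminf T_nn\rho_n>0$, not $n^2\rho_n\to\infty$; this density condition therefore has to be inherited from the paper's standing assumptions on $(T_n,\rho_n)$ (under which the recoverable regime indeed has $n^2\rho_n\to\infty$) or argued separately. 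If one prefers to avoid the Bernoulli interchange in step (i), a self-contained alternative is to run the split over $\omega(1)$ disjoint choices of which pair of layers to hold out and take the maximum test statistic: under $P_{1,n}$ at least one chosen pair consists of one assortative and one disassortative layer, and after deleting it and rerandomizing the order of the remaining $T_n$ layers those layers are exactly $P_1(n,T_n,\rho_n)$-distributed; the $P_{0,n}$ analysis then only pays an extra $\log$ factor inside the Bernstein exponent.
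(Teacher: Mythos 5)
Your proposal is correct and takes essentially the same approach as the paper: split off the two extra layers, run the assumed recovery estimator on the first $T_n$ layers, and separate the two hypotheses via Bernstein concentration of an edge-count statistic aligned with $\hat\bsigma$, the separation working precisely because the standing assumption $\rho_n^{-1}=o(n^2)$ forces $n^2\rho_n\to\infty$. The one caveat is that your primary step (i) --- swapping to Bernoulli priors on $\bsigma$ and $\btau$ --- is not licensed by the lemma as stated (the recoverability hypothesis is for the uniform priors on $\mathcal S_n$ and $\mathcal S_{T_n}$), so you should rely on your own fallback of holding out several disjoint layer pairs and taking the maximum statistic, which is exactly the paper's random-reshuffling device for handling the fact that $\btau$ need not split evenly over the first $T_n$ layers.
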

\Cref{lem:recover=>detect} implies that, asymptotically, the recovery problem is at least as hard as the detection problem.  The change from $T_n$ to $T_n+2$ makes no difference in the asymptotic framework considered in this paper. Therefore, we will provide upper bound results for recovery and lower bound results for detection, and show that there is no gap between recovery and detection for the MLSBM sequence considered.

\subsection*{The range of model parameters}
We will focus on a certain range of $T_n$ and $\rho_n$ that is of main practical interest.  First, it is natural to restrict $\rho_n$ such that $\rho_n=o(1)$ and $\rho_n=\omega(n^{-2})$. 
In fact, the regime $\rho_n=\Omega(\log n/n)$ would make the model $P_{1,n}$ easily recoverable and detectable even when $T=1$ \citep{LeiR14}.  If $\rho_n=o(n^{-2})$, then most of the layers will be completely empty, and it is natural to exclude such a scenario because in practice only non-empty layers are relevant. Second, we will also restrict to $T_n=\omega(1)$, which makes the problem sufficiently different from single layer SBMs.

We summarize the range of $(T_n,\rho_n)$ as below for ease of reference.
\begin{assumption}[Asymptotic regime of $(T_n,\rho_n)$]\label{ass:asymp_regime}
When $n\rightarrow\infty$, we have $T_n\rightarrow\infty$, $\rho_n\rightarrow 0$, $\rho_n^{-1}=o(n^2)$.
\end{assumption}

If we further simplify $(T_n,\rho_n)$ as monomials of $n$, \Cref{ass:asymp_regime} corresponds to 
$$
T_n=n^a\,,~~~ \rho_n=n^{-b}\,,~~~~a>0,~~b\in (0,2)\,.
$$
Then we can summarize our main results in terms of the regime on the parameter space $(a,b)\in (0,\infty)\times (0,2)$.

\begin{theorem}[Simplified main result]
Let $T_n=n^a$, $\rho_n=n^{-b}$ for some $a>0$, $b\in(0,2)$. Then
	\begin{itemize}
		\item without computational constraints, the MLSBM is recoverable if 
		$1+a-b>0$; and is not recoverable by any algorithm if $1+a-b<0$.
		\item assuming the low-degree polynomial conjecture, the MLSBM is recoverable using a polynomial-time algorithm if $1+a/2-b>0$, and not recoverable by any polynomial-time algorithm if $1+a/2-b<0$.
	\end{itemize}
	The above results also hold for detection.
\end{theorem}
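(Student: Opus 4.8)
The statement bundles two achievability claims (the ``recoverable'' directions, under strict inequalities), two impossibility claims, and the assertion that all four transfer to detection. I would treat the four directions separately and pass between recovery and detection only through \Cref{lem:recover=>detect}: the detection upper bounds follow from the recovery upper bounds, and the recovery lower bounds follow from the detection lower bounds (the $T_n\mapsto T_n+2$ change being asymptotically invisible). Write $x_i = 2\sigma(i)-1\in\{\pm1\}$ and $\zeta_t = 1-2\tau(t)\in\{\pm1\}$, so that $\mathbb{E}[A_t(i,j)] = \rho + \tfrac{\rho}{2}\zeta_t x_i x_j$; every threshold below is governed by the information-theoretic SNR $T\rho n$ (which is $n^{a+1-b}$) and the polynomial-time SNR $\sqrt{T}\,\rho n$ (which is $n^{a/2+1-b}$).

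\emph{Upper bounds.} For $1+a/2-b>0$ I would run the bias-adjusted spectral estimator of \cite{lei2022bias}: form the debiased aggregate $\widehat M = \sum_{t=1}^{T}\big[(A_t-\widehat\rho J)(A_t-\widehat\rho J)^\top - \widehat D_t\big]$, with $J$ the all-ones matrix and $\widehat D_t$ the diagonal term cancelling the $\Theta(T\rho n)$ bias on the diagonal; then $\mathbb{E}\widehat M\approx \tfrac{T\rho^2 n}{4}xx^\top$ off the diagonal, with leading eigenvalue $\asymp T\rho^2 n^2$, while a matrix-Bernstein bound (after trimming high-degree rows, since the entries are sparse Bernoulli) controls the noise spectral norm by $\widetilde O(\rho n\sqrt{T})$, so $\sqrt{T}\rho n\to\infty$ makes the signal dominate; Davis--Kahan and rounding of the top eigenvector then give $\ell_n(\hat\bsigma,\bsigma)\to 0$, the $\mathrm{polylog}$ factors being harmless under the monomial parametrization (sharpening this to match $1+a/2-b>0$ exactly is the point of ``settling'' the open question of \cite{lei2022bias}). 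For $1+a-b>0$ I would instead maximize the profile log-likelihood over $\mathcal S_n\times\mathcal S_T$ — or its leading surrogate $\Lambda(\bsigma',\btau') = \sum_{t,\,i<j}(A_t(i,j)-\rho)\zeta'_t x'_i x'_j$ — by exhaustive search. At the truth $\Lambda\asymp T\rho n^2$, whereas for a competitor with $\bsigma'$ at normalized Hamming distance $\delta\in[\epsilon,1-\epsilon]$ from $\bsigma$ the gap $\Lambda(\bsigma,\btau)-\Lambda(\bsigma',\btau')$ has mean $\gtrsim \epsilon T\rho n^2$ and standard deviation $\lesssim n\sqrt{T\rho}$; a Chernoff bound and a union bound over the $2^{O(n+T)}$ such competitors pin the maximizer within $\epsilon n$ of $\bsigma$ once $T\rho n\to\infty$ (the case $T\gtrsim n$ being automatic from $\rho n^2\to\infty$ under \Cref{ass:asymp_regime}).

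\emph{Lower bounds.} Both impossibility directions are detection statements, and their difference is precisely whether the layer labels are available. For $1+a-b<0$ I would use convexity of total variation to condition on $\btau$: $\mathrm{TV}(P_{1,n},P_{0,n})\le \mathbb{E}_{\btau}\,\mathrm{TV}(P_{1,n}^{\btau},P_{0,n})$, with $P_{1,n}^{\btau}$ the model at fixed $\btau$. Because $\zeta_t^2=1$, the second moment of the per-slot likelihood-ratio factor at fixed $\btau$ collapses to $1 + c_\rho^2 x_i x_j x'_i x'_j$ with $c_\rho\asymp\sqrt\rho$, so $\chi^2(P_{1,n}^{\btau}\,\|\,P_{0,n}) = \mathbb{E}_{\bsigma,\bsigma'}\prod_{i<j}(1+c_\rho^2 y_i y_j)^T - 1$ with $y_i=x_ix'_i$; the usual single-layer-SBM reduction to $(\sum_i y_i)^2$ followed by a moment-generating-function bound shows this vanishes when $T\rho n\to0$, hence $\mathrm{TV}(P_{1,n},P_{0,n})\to0$. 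For $1+a/2-b<0$ this conditioning is unavailable (the unconditional $\chi^2(P_{1,n}\|P_{0,n})$ itself only reaches the $\sqrt{T}\rho n$ threshold), and I would instead bound the low-degree projection of $L = dP_{1,n}/dP_{0,n}$: in the product Bernoulli--Fourier basis $\widehat L(S) = c_\rho^{|S|}\,\mathbb{E}_{\btau}\!\big[\prod_t\zeta_t^{m_t(S)}\big]\,\mathbb{E}_{\bsigma}\!\big[\prod_i x_i^{d_i(S)}\big]$ over finite sets $S$ of (layer, node-pair) slots, so a surviving $S$ uses every layer and every node an even number of times; enumerating such configurations by connected clusters, the dominant cluster is a $4$-cycle whose two opposite edge-pairs sit in two distinct layers, contributing $\asymp\rho^4 n^4 T^2$, so $\|L_{\le D}\|^2 = O(1)$ for $D=n^{o(1)}$ exactly when $\sqrt{T}\rho n\to0$; the low-degree-likelihood-ratio conjecture of \cite{kunisky2019notes} then excludes polynomial-time distinguishers.

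\emph{Main obstacle.} The crux is the combinatorial bookkeeping in the two lower bounds, above all proving that among all admissible $S$ the ``two-layer $4$-cycle'' family (and its node- and layer-disjoint products) is dominant throughout $1+a/2-b<0$: this amounts to maximizing $n^{V-2s}T^{L-2r}\rho^{E}$ over multigraphs with $V$ nodes, $E$ edge-slots, $L$ layers, $2s$ odd-degree nodes and $2r$ odd-count layers, and checking the optimum sits at $s=r=0$, $V=E$, $L=E/2$. Two nuisances accompany it: the sparse-Bernoulli Fourier coefficient $c_\rho\asymp\sqrt\rho$ must overpower the factorially many cluster shapes of each size (this is where $\rho^{-1}=o(n^2)$ from \Cref{ass:asymp_regime} is used), and the balanced priors on $\bsigma$ and $\btau$ introduce sampling-without-replacement corrections to the Fourier coefficients, which I would either carry through or eliminate by switching to the Bernoulli priors as indicated after \Cref{def:model_sequence}. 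A milder issue is ensuring the spectral upper bound loses no spurious polynomial-in-$n$ factor near the threshold, so that the polynomial-time recoverable region is exactly $1+a/2-b>0$.
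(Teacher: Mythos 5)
Your proposal is correct and follows essentially the same route as the paper: the theorem is assembled from the same four constituent results (the bias-adjusted spectral upper bound cited from prior work, the exhaustive blockwise-edge-count/MLE argument with a Bernstein-plus-union bound for $1+a-b>0$, the $\chi^2$ bound conditional on $\btau$ reducing to the community overlap for $1+a-b<0$, and the Fourier expansion of $L_n^{\le D}$ with parity constraints on node and layer multiplicities for $1+a/2-b<0$), glued together by \Cref{lem:recover=>detect} exactly as you describe. Your identification of the dominant surviving configuration as a two-layer $4$-cycle contributing $(\rho n\sqrt{T})^4$ and of the balanced-prior (sampling-without-replacement) corrections as the main bookkeeping burden matches what the paper's Lemmas \ref{lem:re-exp}--\ref{lem:Lambda} and Lemma \ref{l2} actually handle.
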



\section{Computational lower and upper bounds}\label{sec:comp}

Now we present the computation thresholds for detection and recovery in the multi-layer stochastic block model. The computational upper bound for recovery (and hence detection) has been studied in \cite{lei2020consistent,lei2022bias}, in which a variant of spectral clustering was proved to consistently recover the membership when $nT_n^{1/2}\rho_n\gtrsim O(\sqrt{\log(n)})$. 

\begin{theorem}[Computational upper bound \cite{lei2020consistent}]
    If $n T_n^{1/2}\rho_n\ge C\sqrt{\log(n)}$ for some absolute constant $C>0$, then MLSBM sequence $(P_{1,n}:n\ge 1)$ specified in \Cref{def:model_sequence} is asymptotically recoverable by a polynomial-time algorithm.
\end{theorem}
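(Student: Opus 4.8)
The plan is to analyze the \emph{bias-adjusted spectral aggregation} estimator of \cite{lei2020consistent,lei2022bias}, which in particular does not use the layer identities $\btau$. Encode the community labels by the signed vector $z\in\{\pm1\}^n$, $z_i=(-1)^{\sigma(i)}$, so that with $\mathbf 1$ the all-ones vector the layer mean matrices satisfy, off the diagonal, $P_t:=\mathbb E[A_t\mid\bsigma,\btau]=\rho\,\mathbf 1\mathbf 1^\top+\tfrac{(-1)^{\tau_t}}{2}\rho\,zz^\top$ (we may condition on $(\bsigma,\btau)$ since they are always balanced). Form
$$\hat G=\sum_{t=1}^{T}\Big(A_tA_t^\top-\mathrm{diag}(A_t\mathbf 1)\Big),$$
possibly after trimming the $o(n)$ vertices of anomalously large degree in some layer, compute the leading eigenvector $\hat v$ of $\hat G$ restricted to $\mathbf 1^\perp$, threshold its signs, and re-balance to obtain $\hat\bsigma\in\mathcal S_n$. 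Forming $\hat G$ and an eigenvector is polynomial in $n$ and $T$, so only the statistical analysis needs work.

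The structural fact that makes squaring the layers the right operation is that $z$ is balanced, hence $\mathbf 1^\top z=0$ and $P_t^2=n\rho^2\,\mathbf 1\mathbf 1^\top+\tfrac{n\rho^2}{4}zz^\top$, which is \emph{independent of} $\tau_t$: the community signal from assortative and disassortative layers adds constructively after squaring, whereas it would cancel in $\sum_tA_t$. Since $\mathbb E[(A_tA_t^\top)_{ij}]=(P_t^2)_{ij}$ up to an $O(\rho^2)$ per-entry correction for $i\ne j$, while $\mathrm{diag}(A_t\mathbf 1)$ is an unbiased estimate of the diagonal of $\mathbb E[A_tA_t^\top]$ — this is the bias adjustment, killing a spurious diagonal of size $\asymp Tn\rho$ that would otherwise dominate the spectrum — one gets $\mathbb E[\hat G]=Tn\rho^2(\mathbf 1\mathbf 1^\top-I)+\tfrac{Tn\rho^2}{4}(zz^\top-I)+(\text{lower order})$. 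On $\mathbf 1^\perp$ this has leading eigenvector $z/\sqrt n$ with eigenvalue $\asymp Tn^2\rho^2$ and a spectral gap of the same order.

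By Davis--Kahan together with a rounding bound — a node is misclassified only if the corresponding coordinate of $\hat v$ differs from that of $z/\sqrt n$ by at least $1/\sqrt n$, so the misclassification fraction is $O(\|\hat v-z/\sqrt n\|_2^2)$ — it then suffices to show that, with probability tending to $1$, $\|\hat G-\mathbb E\hat G\|_{\mathrm{op}}=o(Tn^2\rho^2)$; since the hypothesis gives $Tn^2\rho^2\ge C^2\log n\to\infty$, this forces $\ell_n(\hat\bsigma,\bsigma)\to0$. With $W_t=A_t-P_t$ the centered error splits as $\sum_t(P_tW_t^\top+W_tP_t^\top)+\sum_t(W_tW_t^\top-\mathbb E[W_tW_t^\top])-\sum_t\big(\mathrm{diag}(A_t\mathbf 1)-\mathbb E\,\mathrm{diag}(A_t\mathbf 1)\big)$. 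The cross terms are, per layer, rank one of the form $\rho\,\mathbf 1(\mathbf 1^\top W_t)+\tfrac{(-1)^{\tau_t}}{2}\rho\,z(z^\top W_t)$, and summing the corresponding scalar/vector fluctuations bounds their operator norm by order $n^{3/2}\rho^{3/2}\sqrt T=o(Tn^2\rho^2)$ because $Tn\rho\to\infty$; the diagonal correction has order $\sqrt{Tn\rho}$ and is negligible. The remaining quadratic term has off-diagonal entries that are already centered with variance of order $Tn\rho^2$, suggesting an operator norm of order $n\rho\sqrt T=o(Tn^2\rho^2)$.

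The hard part is making that last bound rigorous in the sparse regime $n\rho=o(\log n)$ (possibly even $n\rho\to0$): generic matrix-Bernstein bounds on $\sum_tW_tW_t^\top$ carry $\log n$ factors and an almost-sure term driven by occasional high-degree vertices, either of which would destroy the $o(Tn^2\rho^2)$ margin. Following \cite{lei2020consistent,lei2022bias}, one first regularizes by trimming the $o(n)$ high-degree vertices and then controls the operator norm of the trimmed quadratic term by a moment / combinatorial trace estimate in the spirit of F\"uredi--Koml\'os, using the averaging over the $T$ independent layers to beat the sparsity. Once this is in place the Davis--Kahan argument closes, $\hat\bsigma$ is consistent, and recoverability — and hence, via \Cref{lem:recover=>detect}, distinguishability at $(T_n+2,\rho_n)$ — follows.
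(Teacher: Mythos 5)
The paper does not prove this theorem at all --- it imports it verbatim from \cite{lei2020consistent,lei2022bias} --- and your sketch is a faithful reconstruction of exactly the bias-adjusted spectral argument of those references: the right estimator $\sum_t\bigl(A_tA_t^\top-\mathrm{diag}(A_t\mathbf 1)\bigr)$, the right structural reason that squaring makes the community signal independent of $\tau_t$ while the degree-diagonal subtraction removes the $\asymp T n\rho$ bias, and the right identification of the sparse-regime operator-norm control of $\sum_t(W_tW_t^\top-\mathbb E W_tW_t^\top)$ (via trimming plus trace-moment arguments) as the one step that must be borrowed from the cited works. Since that is precisely where the paper itself defers to the literature, your proposal matches the paper's treatment and I have nothing to correct.
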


Next, we derive a nearly matching computational lower bound for detection in MLSBM, up to a logarithm factor. Our proof is based on the low-degree polynomial argument introduced and developed in \cite{hopkins2018statistical,kunisky2019notes}.  The basic intuition behind this framework is that if a planted hidden structure cannot be detected using low-degree polynomials of the input data, then it cannot be detected by any polynomial time algorithm.  A formal justification of this intuition is stronger than proving ``P$\neq$NP'', and hence instead of a rigorous proof we can only realistically hope to have evidence for such an intuition.  Much evidence in favor of this conjecture has been gathered in the thesis work \cite{hopkins2018statistical}.  On the other hand, many widely implemented polynomial algorithms that achieve the best possible detection thresholds in various related problems, including spectral methods and approximate message passing (see a survey by \cite{feng2022unifying}), can be expressed as low-degree polynomials \citep{gamarnik2020low,kunisky2019notes}.  The low-degree polynomial method can be seen as a ``light version" of the sum-of-squares (SOS) computational lower bounds, which is more straightforward to establish and usually yields the same results for natural average-case hardness problems. Low-degree polynomial computational hardness results have been successfully obtained in a number of problems, such as the planted clique detection \citep{barak2019nearly,hopkins2018statistical}, community detection in stochastic block models \citep{ hopkins2018statistical,hopkins2017efficient}, spiked tensor models \citep{ hopkins2018statistical,hopkins2017power,kunisky2019notes}, spiked Wishart models \citep{bandeira2020computational}, sparse PCA \citep{ding2023subexponential}, spiked Wigner model \citep{kunisky2019notes}, clustering \citep{davis2021clustering,loffler2022computationally,lyu2023optimal}, planted vector recovery \citep{mao2021optimal}, certifying RIP \citep{ding2021average}, independent component analysis \citep{auddy2023large}, random $k$-SAT \citep{bresler2022algorithmic}, and tensor regression \citep{diakonikolas2023statistical,luo2022tensor}. It is also increasingly believed that the low-degree polynomials method can encapsulate the fundamental principles that determine the success or failure of sum-of-squares algorithms \citep{hopkins2018statistical, kunisky2019notes}.

Our argument for the computational lower bound relies on the following low-degree polynomial conjecture, which is a specialized version of the more general statement \citep[Conjecture 2.2.4]{hopkins2018statistical} adapted to our specific context.
\begin{conjecture}[Low-degree polynomial conjecture: Conjecture 2.2.4 in \cite{hopkins2018statistical}]\label{c3.1}
    Let $P_{0,n}$ and $P_{1,n}$ be two sequences of MLSBM defined in \Cref{def:model_sequence} with parameters $(T_n,\rho_n)$ satisfying the asymptotic regime specified in \Cref{ass:asymp_regime}. If every polynomial $\psi$ of degree\footnote{Here the polynomial is viewed as a multivariate polynomial with input vector $\mathbf A\in \mathbb R^{{n\choose 2}\times T_n}$.} at most $D_n=\log^{1.01}(n)$ with $\mathbb{E}_{P_{0,n}}\psi^2 = 1$  and $\mathbb E_{P_{0,n}}\psi=0$ satisfies $\mathbb E_{P_{1,n}}\psi=O(1)$ uniformly, then $P_{1,n}$ is not distinguishable from $P_{0,n}$ by any polynomial-time algorithm.
\end{conjecture}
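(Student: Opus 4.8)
The statement is a \emph{conjecture} rather than a theorem, and deliberately so: its conclusion — that low-degree indistinguishability forces the absence of \emph{any} polynomial-time distinguisher — is a form of average-case computational hardness that cannot be established unconditionally, since a genuine proof would in particular imply $\mathrm{P}\neq\mathrm{NP}$ (and much more). The realistic proof goal is therefore not to derive the implication from first principles, but to \emph{reduce} the stated specialized version to the general low-degree conjecture of \cite{hopkins2018statistical} (their Conjecture 2.2.4), by checking that the MLSBM detection problem of \Cref{def:model_sequence} is a bona fide instance of the template to which that general conjecture applies. The plan is to set up the correspondence between the two formulations and to verify that none of the structural hypotheses underlying the general conjecture is violated by the multi-layer model.

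Concretely, I would first cast the problem in canonical ``planted versus null'' form: the null $P_{0,n}$ is the product measure $\bigotimes\mathrm{Bernoulli}(\rho_n)$ over the $N_n=\binom{n}{2}T_n$ independent edge variables, while $P_{1,n}$ is the mixture, over the hidden pair $(\bsigma,\btau)$ drawn from $\mathrm{Uniform}(\mathcal S_n)\times\mathrm{Uniform}(\mathcal S_T)$, of the corresponding product Bernoulli measures — exactly the mixture-of-products structure Hopkins' framework is built around. Second, I would record that the stated hypothesis is precisely boundedness of the low-degree likelihood ratio: the supremum of $\mathbb E_{P_{1,n}}\psi$ over mean-zero, unit-$L^2(P_{0,n})$-norm polynomials $\psi$ of degree at most $D_n$ equals the norm of the degree-$\le D_n$ projection of $dP_{1,n}/dP_{0,n}-1$, so ``$\mathbb E_{P_{1,n}}\psi=O(1)$ uniformly'' is the very quantity the general conjecture uses to predict hardness. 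Third, I would verify the scaling: under \Cref{ass:asymp_regime} the input size $N_n$ is polynomial in $n$, so ``polynomial time in $n$'' and ``polynomial time in $N_n$'' agree up to the exponent, and $D_n=\log^{1.01}(n)$ sits just above the $O(\log n)$ scale that the heuristic identifies with polynomial running time, making degree-$\le D_n$ hardness the correct proxy for polynomial-time hardness. The genuinely content-bearing checks here concern the \emph{sparsity} and the extra layer prior: that the null stays an exact product measure as $\rho_n\to0$ so the degree grading is unambiguous; that adjoining $\btau$ merely enlarges the hidden structure from $\bsigma$ to $(\bsigma,\btau)$ without breaking the mixture-of-products form; and that the very sparse regime $\rho_n=\omega(n^{-2})$ still yields a genuine, non-degenerate instance in terms of $N_n$.

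The main obstacle, however, is not any of these verifications: it is the implication itself. No technique is known that converts bounded low-degree advantage into a lower bound against all polynomial-time algorithms, which is exactly why the statement must remain conjectural. In lieu of a proof I would assemble the standard supporting evidence — the equivalence, under mild conditions, between the low-degree heuristic and the predictions of the sum-of-squares hierarchy \citep{hopkins2018statistical,kunisky2019notes}; the fact that essentially all known optimal polynomial-time tests (spectral methods, approximate message passing) are realizable as low-degree polynomials; and the long list of models (planted clique, single-layer SBM, tensor PCA, sparse PCA, among those cited above) where the low-degree threshold provably coincides with the conjectured computational threshold. Together these justify \emph{adopting} the conjecture for the MLSBM, which is all the subsequent lower-bound arguments require.
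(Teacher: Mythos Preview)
Your proposal is correct and mirrors the paper's own treatment: the paper does not (and cannot) prove the conjecture, but instead remarks that the general Conjecture 2.2.4 of \cite{hopkins2018statistical} applies once one verifies its regularity conditions for the MLSBM sequence of \Cref{def:model_sequence} under \Cref{ass:asymp_regime}, and then supplies the same Le Cam/low-degree-likelihood-ratio intuition and list of corroborating models that you outline. Your additional explicit checks (product-measure null, mixture-of-products planted model, polynomial equivalence of $n$ and $N_n$, degree scale $D_n=\log^{1.01}n$) are exactly the ``regularity conditions'' the paper alludes to but does not spell out.
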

\begin{remark}
The original version of the conjecture in \cite{hopkins2018statistical} is more general and requires various forms of regularity conditions.  These conditions can be directly verified for our MLSBM sequence $(P_{0,n}, P_{1,n})_{n\ge 1}$ specified in \Cref{def:model_sequence} with $(T_n,\rho_n)$ satisfying \Cref{ass:asymp_regime}.
\end{remark}

\begin{remark}[Intuition of the low-degree polynomial conjecture]
The low-degree polynomial conjecture stems from the classical asymptotic decision theory \citep{le2012asymptotic} and the sum-of-square optimization \citep{hopkins2018statistical}. 
A typical approach to study the statistical distinguishability between $P_{0,n}$ and $P_{1,n}$ is by Le Cam's contiguity \cite{le2012asymptotic}.
Recall that for two sequences of probability distributions, $(P_{0,n},P_{1,n})_{n\in\mathbb N}$, over a common sequence of measurable spaces, we say $P_{1,n}$ is contiguous to $P_{0,n}$, if for any sequence of events $(E_n:n\ge 1)$, $P_{0,n}(E_n) \rightarrow 0$ implies $P_{1,n}(E_n) \rightarrow 0$.
It is well known that contiguity implies statistical impossibility in distinguishing $P_{1,n}$ and $P_{0,n}$ since vanishing type I errors must imply vanishing power. 
Meanwhile, contiguity can be established through the second-moment method, which is essentially the likelihood ratio test:
If $\mathbb{E}_{\mathbf A\sim P_{0,n}}\{[P_{1,n}(\mathbf A)/P_{0,n}(\mathbf A)]^2\}$ remains bounded as $n\rightarrow\infty$, then $P_{1,n}$ is contiguous to $P_{0,n}$.

When there are computational challenges in computing the exact value of the likelihood ratio $L_n(\mathbf A)$, it is natural to study the projection of the likelihood ratio onto the subspace spanned by low-degree polynomials (the exact definition is to be specified in Lemma \ref{l1} below). If the $L^2(P_{0,n})$ norm of this projected likelihood remains bounded when $n\rightarrow\infty$,
it is reasonable to conjecture that no logarithmic-degree polynomial can effectively distinguish $P_{0,n}$ and $P_{1,n}$. 

\end{remark}

Equipped with the low-degree polynomial conjecture, we now present our main result on computational thresholds in MLSBM.
\begin{theorem}[Computational lower bound]\label{t3.1}
    Assuming the low-degree polynomial conjecture \Cref{c3.1}, the MLSBM sequences $(P_{1,n},P_{0,n})$ specified in \Cref{def:model_sequence} with $\rho_n$ satisfying \Cref{ass:asymp_regime} are not distinguishable by any polynomial-time algorithm if
        $n  T_n^{1/2}\rho_n \le (1/2)(\log n)^{-1.4}$ for all $n$ large enough.
\end{theorem}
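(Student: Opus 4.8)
The plan is to apply \Cref{c3.1}: it suffices to show that every polynomial $\psi=\psi(\mathbf A)$ of degree at most $D_n=\log^{1.01}(n)$ with $\mathbb E_{P_{0,n}}\psi=0$ and $\mathbb E_{P_{0,n}}\psi^2=1$ obeys $\mathbb E_{P_{1,n}}\psi=O(1)$ uniformly in $n$. Let $L_n=\mathrm dP_{1,n}/\mathrm dP_{0,n}$ denote the likelihood ratio between the $\mathbf A$-marginals and $L_n^{\le D}$ its orthogonal projection in $L^2(P_{0,n})$ onto the subspace of polynomials of degree at most $D_n$. Since $\psi$ has degree at most $D_n$, $\mathbb E_{P_{1,n}}\psi=\langle \psi,L_n\rangle_{P_{0,n}}=\langle\psi,L_n^{\le D}\rangle_{P_{0,n}}\le \|\psi\|_{P_{0,n}}\,\|L_n^{\le D}\|_{P_{0,n}}=\|L_n^{\le D}\|_{P_{0,n}}$, so the whole claim reduces to the estimate $\|L_n^{\le D}\|_{P_{0,n}}^2=O(1)$; in fact I would aim for $\|L_n^{\le D}\|_{P_{0,n}}^2=1+o(1)$.

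Next I would expand this norm in the Fourier basis of $L^2(P_{0,n})$. Under $P_{0,n}$ the coordinates $A_t(i,j)$ are i.i.d.\ $\mathrm{Bernoulli}(\rho_n)$, so the normalized indicators $h_t(i,j)=(A_t(i,j)-\rho_n)/\sqrt{\rho_n(1-\rho_n)}$ and their products $\chi_S=\prod_{(e,t)\in S}h_t(e)$, indexed by subsets $S\subseteq\binom{[n]}{2}\times[T_n]$, form an orthonormal basis, whence $\|L_n^{\le D}\|_{P_{0,n}}^2=\sum_{|S|\le D_n}\widehat L_n(S)^2$ with $\widehat L_n(S)=\mathbb E_{\bsigma,\btau}\,\mathbb E_{\mathbf A\sim P_{\bsigma,\btau}}\chi_S=\mathbb E_{\bsigma,\btau}\prod_{(e,t)\in S}\frac{B^{(\tau_t)}(\sigma_i,\sigma_j)-\rho_n}{\sqrt{\rho_n(1-\rho_n)}}$ for $e=\{i,j\}$, where $P_{\bsigma,\btau}$ is the conditional law of $\mathbf A$ given $(\bsigma,\btau)$. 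Plugging in \eqref{eq:B(1)B(2)} gives $B^{(\tau_t)}(\sigma_i,\sigma_j)-\rho_n=\tfrac{\rho_n}{2}(-1)^{\tau_t}(-1)^{\mathbbm{1}\{\sigma_i\neq \sigma_j\}}$; since $\bsigma$ and $\btau$ are independent and the parity character factorizes over layers and over vertices, with $\gamma:=\rho_n/(2\sqrt{\rho_n(1-\rho_n)})$ one obtains
\[
\widehat L_n(S)=\gamma^{|S|}\;\mathbb E_{\btau}\Big[\prod_{t}(-1)^{\tau_t m_t(S)}\Big]\;\mathbb E_{\bsigma}\Big[\prod_{i}(-1)^{\sigma_i d_i(S)}\Big],
\]
where $m_t(S)$ counts the edges of $S$ in layer $t$ and $d_i(S)$ counts the $(e,t)\in S$ with $i\in e$.

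I would then bound the combinatorial sum. The two expectations are Krawtchouk-type correlations of parity characters under the balanced priors: each has absolute value at most $1$, vanishes unless $\{t:m_t(S)\text{ odd}\}$ and $\{i:d_i(S)\text{ odd}\}$ both have even size, and when these sets have sizes $2\ell$ and $2\ell'$ the factors are $\lesssim(\ell/T_n)^{\ell}$ and $\lesssim(\ell'/n)^{\ell'}$ (equivalently, by the remark after \Cref{def:model_sequence} one may switch to the Bernoulli priors, after which only the \emph{even} sets $S$---all $m_t(S)$ even and all $d_i(S)$ even---contribute, with $\widehat L_n(S)^2=\gamma^{2|S|}$). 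Regard $S$ as a multigraph on $[n]$ whose $|S|$ edge-slots are colored by layers, and split $\sum_{|S|\le D_n}\widehat L_n(S)^2=1+\sum_{0<|S|\le D_n}(\cdots)$ according to the connected components of this multigraph; because distinct components use disjoint vertex sets, the full sum is at most $\exp$ of its restriction to connected $S$. For a connected even $S$ with $|S|=2m$ edge-slots, $V$ vertices and $\Lambda$ layers, the even-degree constraints force $V\le 2m$ and $\Lambda\le m$, so there are at most $n^{2m}T_n^{m}$ placements per isomorphism type; the extremal types are single $2m$-cycles whose layer labels pair up into $m$ distinct layers, and counting placements of these ($\approx\binom{T_n}{m}(2m-1)!!\,m!\cdot n^{2m}/(4m)$ of them) against the weight $\gamma^{4m}$ yields a contribution $\lesssim\sum_{m\ge 2}\tfrac1m\big(c\,m\,(n T_n^{1/2}\rho_n)^2\big)^m$ for an absolute constant $c$; fewer vertices cost a factor $n^{-1}$ each, fewer layers cost $T_n^{-1}$ each, and the additional odd-degree configurations permitted by the balanced priors carry the squared Krawtchouk suppression just noted, so all of these are lower order.

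Finally, since $nT_n^{1/2}\rho_n\le \tfrac12(\log n)^{-1.4}$ and $D_n=\log^{1.01}(n)$, we get $D_n\,(nT_n^{1/2}\rho_n)^2\le\tfrac14(\log n)^{1.01-2.8}=\tfrac14(\log n)^{-1.79}=o(1)$, hence $c\,m\,(nT_n^{1/2}\rho_n)^2=o(1)$ uniformly over $m\le D_n/2$ and the displayed series is $o(1)$; therefore $\|L_n^{\le D}\|_{P_{0,n}}^2=1+o(1)$ and \Cref{c3.1} yields that $P_{1,n}$ is not distinguishable from $P_{0,n}$ by any polynomial-time algorithm. The step I expect to be the main obstacle is the combinatorial bookkeeping just sketched: proving rigorously that single-cycle-type configurations dominate, which entails controlling symmetry factors, using the squared Krawtchouk factors to dispose of ``bushy'' configurations with many odd-degree loose ends, and justifying the passage to $\exp$ of the connected part when components may share layers; verifying the regularity hypotheses behind \Cref{c3.1} for this model sequence (flagged in the remark following it) is routine but also required.
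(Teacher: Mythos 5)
Your overall route is the same as the paper's: reduce via \Cref{c3.1} and the identity $\sup_\psi \mathbb E_{P_{1,n}}\psi=\lVert L_n^{\le D_n}-1\rVert$ (the paper's \Cref{l1}), expand $L_n^{\le D_n}$ in the orthonormal basis $\chi_S$ under $P_{0,n}$, compute $\widehat L_n(S)$ by factorizing the parity character over vertices and layers (your Krawtchouk factors are exactly what the paper's \Cref{l2} evaluates, giving the $\binom{n/2}{r}^2/\binom{n}{2r}^2$ and $\binom{T_n/2}{k}^2/\binom{T_n}{2k}^2$ suppressions in \Cref{lem:re-exp}), and then count configurations. Up to that point your proposal is correct, modulo one caution: "switching to the Bernoulli priors" changes $P_{1,n}$ and hence $L_n$, so it cannot be invoked as a free simplification inside this proof; the paper keeps the balanced priors throughout and handles the odd-parity sets explicitly.

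The genuine gap is in the counting step, and it is exactly the step you flag as the main obstacle. Your claimed bound $\sum_{m}\tfrac1m\bigl(c\,m\,(nT_n^{1/2}\rho_n)^2\bigr)^m$ rests on the assertion that connected single-cycle configurations dominate and that configurations with fewer vertices or layers are "lower order" because each deficit costs $n^{-1}$ or $T_n^{-1}$. This does not follow as stated: with $a\le D_n=\log^{1.01}n$, the number of ways to wire $a$ edge-slots into a denser multigraph carries combinatorial factors of size $a^{\Theta(a)}=\exp\bigl(\Theta(\log^{1.01}n\,\log\log n)\bigr)$, which exceeds any fixed power of $n$, so a single lost factor of $n^{-1}$ does not automatically make such configurations negligible. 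Your proposed count ($\approx m^m$ per $2m$ slots, i.e.\ $a^{a/2}$) would in fact prove a \emph{stronger} threshold than the theorem states, which is a sign that you have set up a harder lemma than necessary. The paper instead proves only the crude bound $|\Lambda_{n,a,r,k}|\le 2^{1+5a/2}a^{4a/3}\binom{n}{2r}\binom{T_n}{2k}n^{a-r}T_n^{a/2-k}$ (\Cref{lem:Lambda}), obtained by mapping each configuration to a pair of multisets and paying the full pairing factor $a!\binom{2a}{a}$; this loses $a^{4a/3}\le D_n^{4D_n/3}$ but still yields $\lVert L_n^{\le D_n}-1\rVert^2\le 8\xi_n/(1-\xi_n)$ with $\xi_n=2D_n^{4/3}\rho_nnT_n^{1/2}$, and the hypothesis $nT_n^{1/2}\rho_n\le(1/2)(\log n)^{-1.4}$ is tuned precisely so that $\xi_n<1$ since $D_n^{4/3}=(\log n)^{4.04/3}\le(\log n)^{1.4}$. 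So to close your argument you should either carry out the delicate cycle-dominance analysis rigorously, or retreat to a cruder, easily provable count of the paper's type, which already gives the stated threshold.
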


As detailed in the proof, the exponent of $-1.4$ can be improved to $-1.01$  if $T_n\gg \log^{2.02}n$.  Here the notion of ``polynomial-time'' means polynomial in $n$, because when $\rho_n$ satisfies \Cref{ass:asymp_regime}, the condition $n T_n^{1/2}\rho_n\le (1/2)(\log n)^{1.4}$ implies $T_n=O(n^2)$ and hence the total size of $\mathbf A$ is polynomial in $n$.  Here we describe the sketch of the proof, highlighting some key steps and techniques.  The proof consists of three main steps as summarized in the three lemmas below. The detailed proofs of these lemmas are given in \Cref{pl2}.

The first step is to realize that among all low-degree polynomials of the input data, the projected likelihood ratio has the largest $L^2(P_{0,n})$ norm, and hence is most likely to break contiguity, as required by distinguishability.
    \begin{lemma}\label{l1} (Proposition I.15 in \cite{kunisky2019notes}).
        For any positive integer $n$, given the data $\mathbf A$ and two MLSBM models $P_{0,n}$ and $P_{1,n}$. Let $L_n(\mathbf A) = \frac{P_{1,n}(\mathbf A)}{P_{0,n}(\mathbf A)}$ be the likelihood ratio. 
        Define the norm $\lVert f\rVert\coloneqq \sqrt{\mathbb{E}_{P_{0,n}}f^2(\mathbf A)}$ and define $f^{\leq D}$ as the projection of any function $f$ to the subspace of polynomials of degree at most $D$. For any positive integer $D$, we have
        \[\lVert L_n^{\leq D}(\mathbf A)-1\rVert = \mathop{\sup}_{\substack{\psi:{\rm deg}(\psi)\leq D,\\\mathbb{E}_{P_{0,n}}\psi^2(\mathbf A) = 1,\\\mathbb{E}_{P_{0,n}}\psi(\mathbf A) = 0}}\mathbb{E}_{P_{1,n}}\psi(\mathbf A).\]
    \end{lemma}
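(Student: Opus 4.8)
The plan is to recognize the right-hand side as the maximization of a linear functional over the unit sphere of a subspace of the Hilbert space $H\coloneqq L^2(P_{0,n})$, and to evaluate it by Cauchy--Schwarz after an orthogonal decomposition of the degree-$\le D$ polynomial subspace. Throughout one works in $H$ with the inner product $\langle f,g\rangle=\mathbb{E}_{P_{0,n}}[f(\mathbf A)g(\mathbf A)]$ inducing the norm $\lVert\cdot\rVert$ of the statement. For each fixed $n$ the data $\mathbf A$ takes values in the finite set $\{0,1\}^{\binom{n}{2}\times T_n}$, so every expectation below is a finite sum; in particular $L_n\in H$ and the orthogonal projection $f\mapsto f^{\le D}$ onto the finite-dimensional subspace $V\coloneqq V_{\le D}\subseteq H$ of multilinear polynomials of degree at most $D$ is well-defined. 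Also, under \Cref{def:model_sequence} with \Cref{ass:asymp_regime} all edge probabilities lie in $(0,1)$, so $P_{1,n}\ll P_{0,n}$ and $L_n=P_{1,n}/P_{0,n}$ is a genuine density.

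First I would rewrite the objective using the change-of-measure identity $\mathbb{E}_{P_{1,n}}[\psi(\mathbf A)]=\mathbb{E}_{P_{0,n}}[\psi(\mathbf A)L_n(\mathbf A)]=\langle\psi,L_n\rangle$, valid for $\psi\in H$. The three constraints then read: $\psi\in V$; $\langle\psi,\mathbf 1\rangle=\mathbb{E}_{P_{0,n}}\psi=0$, where $\mathbf 1$ is the constant function $1$; and $\lVert\psi\rVert=1$. Hence, with $W\coloneqq V\cap\{\mathbf 1\}^\perp$, the supremum equals $\sup\{\langle\psi,L_n\rangle:\psi\in W,\ \lVert\psi\rVert=1\}$. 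Next I would observe that constants have degree $0\le D$, so $\mathbf 1\in V$ and $V=W\oplus\operatorname{span}\{\mathbf 1\}$ is an orthogonal direct sum; consequently the projection onto $V$ splits as $\Pi_V=\Pi_W+\Pi_{\operatorname{span}\{\mathbf 1\}}$. Since $\Pi_V L_n=L_n^{\le D}$ by definition and $\Pi_{\operatorname{span}\{\mathbf 1\}}L_n=\langle L_n,\mathbf 1\rangle\mathbf 1=(\mathbb{E}_{P_{0,n}}L_n)\mathbf 1=\mathbf 1$ (using $\lVert\mathbf 1\rVert=1$ and $\mathbb{E}_{P_{0,n}}L_n=1$), we get $\Pi_W L_n=L_n^{\le D}-\mathbf 1$. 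Then for any feasible $\psi$, orthogonality and Cauchy--Schwarz give $\langle\psi,L_n\rangle=\langle\psi,\Pi_W L_n\rangle\le\lVert\psi\rVert\,\lVert\Pi_W L_n\rVert=\lVert L_n^{\le D}-1\rVert$, and if $L_n^{\le D}-1\neq 0$ this is attained by $\psi^\star\coloneqq(L_n^{\le D}-1)/\lVert L_n^{\le D}-1\rVert\in W$ (the case $L_n^{\le D}-1=0$ being trivial). This yields the claimed equality.

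The argument is a routine Hilbert-space computation, so there is no real obstacle; the only points deserving care are (a) well-definedness of the projections, which is handled by noting the sample space is finite for each $n$ so $L_n\in L^2(P_{0,n})$, and (b) the orthogonal split $V=W\oplus\operatorname{span}\{\mathbf 1\}$, which rests on the elementary but essential fact that constants lie in $V$. This is exactly Proposition I.15 of \cite{kunisky2019notes}, so I would simply reproduce that short argument.
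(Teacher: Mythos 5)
Your argument is correct: the reduction to $\sup_{\psi\in W,\,\lVert\psi\rVert=1}\langle\psi,L_n\rangle=\lVert\Pi_W L_n\rVert$ with $W=V\cap\{\mathbf 1\}^\perp$ and $\Pi_W L_n=L_n^{\le D}-1$ is exactly the standard proof of Proposition I.15 in \cite{kunisky2019notes}, which the paper cites without reproducing a proof. Nothing is missing; the two points you flag (finiteness of the sample space so that $L_n\in L^2(P_{0,n})$, and $\mathbf 1\in V$ so the projection splits) are indeed the only details requiring care.
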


The second step is a re-expression of the low-degree likelihood ratio. With \Cref{l1}, we only need to bound $\|L_n^{\le D_n}-1\|$.  In the second step, we use discrete Fourier basis expansion of $L_n^{\le D_n}$ to re-express this quantity in terms of the sizes of certain subsets of the indexing set.  Define the index set $\Lambda_n \coloneqq \{(i_1,i_2,t): 1\leq i_1<i_2\leq n,1\leq t\leq T_n\}$.  For any $\alpha\subseteq\Lambda_n$, and $i\in[n]$, $t\in[T_n]$, define $u_{i,\alpha}=\sum_{(i',j',t')\in\alpha}\mathds{1}(i=i')+\mathds{1}(i=j')$ and $v_{t,\alpha}=\sum_{(i',j',t')\in\alpha}\mathds{1}(t=t')$ be the number of times the node index $i$ and layer index $t$ appear in $\alpha$, respectively. Let $U_\alpha=\{i\in[n]: u_{1,\alpha} \text{ is odd}\}$, and $V_\alpha=\{t\in[T_n]: v_{t,\alpha} \text{ is odd}\}$.  Finally define $\Lambda_{n,a,r,k}=\{\alpha\subseteq\Lambda_n:|\alpha|=a,~|U_\alpha|=2r,~|V_\alpha|=2k\}$.  We have  the following re-expression of $\|L_n^{\le D_n}-1\|$.

\begin{lemma}[Re-expression of LDLR]\label{lem:re-exp}Under the above notation,
            \begin{align*}
 \lVert L_n^{\leq D_n}(\mathbf A)-1\rVert^2
        =\sum_{a = 1}^{D_n}\left(\frac{\rho_n}{4(1-\rho_n)}\right)^{a}\sum_{r = 0}^{a}\sum_{k = 0}^{[a/2]}|\Lambda_{n,a,r,k}|\frac{\binom{n/2}{r}^2\binom{T_n/2}{k}^2}{\binom{n}{2r}^2\binom{T_n}{2k}^2}\,.
        \end{align*}
\end{lemma}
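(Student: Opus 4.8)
The plan is to expand the likelihood ratio $L_n$ in the orthonormal Fourier basis of $L^2(P_{0,n})$ and read off $\lVert L_n^{\le D_n}-1\rVert^2$ by Parseval. Under $P_{0,n}$ the coordinates $A_e\coloneqq A_t(i,j)$, indexed by $e=(i,j,t)\in\Lambda_n$, are i.i.d.\ ${\rm Bernoulli}(\rho_n)$, so $L^2(P_{0,n})$ is finite dimensional with orthonormal basis $\{\chi_\alpha:\alpha\subseteq\Lambda_n\}$, where $\chi_\alpha(\mathbf A)=\prod_{e\in\alpha}(A_e-\rho_n)/\sqrt{\rho_n(1-\rho_n)}$. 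Each $\chi_\alpha$ is a multilinear polynomial of degree $|\alpha|$, and on the hypercube every polynomial reduces to a multilinear one, so the space of degree-$\le D_n$ polynomials equals ${\rm span}\{\chi_\alpha:|\alpha|\le D_n\}$. Hence $L_n^{\le D_n}=\sum_{|\alpha|\le D_n}\hat L_n(\alpha)\chi_\alpha$ with $\hat L_n(\alpha)=\langle L_n,\chi_\alpha\rangle_{P_{0,n}}=\mathbb E_{P_{1,n}}\chi_\alpha(\mathbf A)$, and since $\hat L_n(\emptyset)=1$, Parseval gives $\lVert L_n^{\le D_n}(\mathbf A)-1\rVert^2=\sum_{1\le|\alpha|\le D_n}(\mathbb E_{P_{1,n}}\chi_\alpha(\mathbf A))^2$.

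Next I would evaluate $\mathbb E_{P_{1,n}}\chi_\alpha$ by conditioning on $(\bsigma,\btau)$, under which the entries of $\mathbf A$ are conditionally independent, so $\mathbb E[\chi_\alpha\mid\bsigma,\btau]=\prod_{(i,j,t)\in\alpha}(B^{(\tau_t)}(\sigma_i,\sigma_j)-\rho_n)/\sqrt{\rho_n(1-\rho_n)}$. A direct check of \eqref{eq:B(1)B(2)} shows $B^{(\tau)}(a,b)-\rho_n=\tfrac12\rho_n(-1)^{a+b+\tau}$ for all $a,b,\tau\in\{0,1\}$, whence
\[
\mathbb E[\chi_\alpha\mid\bsigma,\btau]=\Bigl(\tfrac12\sqrt{\tfrac{\rho_n}{1-\rho_n}}\Bigr)^{|\alpha|}(-1)^{\sum_{(i,j,t)\in\alpha}(\sigma_i+\sigma_j+\tau_t)}.
\]
Since $\sum_{(i,j,t)\in\alpha}(\sigma_i+\sigma_j)=\sum_{i\in[n]}u_{i,\alpha}\sigma_i$ and $\sum_{(i,j,t)\in\alpha}\tau_t=\sum_{t\in[T_n]}v_{t,\alpha}\tau_t$, reducing exponents modulo $2$ factors the sign as $(-1)^{\sum_{i\in U_\alpha}\sigma_i}(-1)^{\sum_{t\in V_\alpha}\tau_t}$, and, using independence of the priors on $\bsigma$ and $\btau$,
\[
\hat L_n(\alpha)=\Bigl(\tfrac12\sqrt{\tfrac{\rho_n}{1-\rho_n}}\Bigr)^{|\alpha|}\Bigl(\mathbb E_{\bsigma}(-1)^{\sum_{i\in U_\alpha}\sigma_i}\Bigr)\Bigl(\mathbb E_{\btau}(-1)^{\sum_{t\in V_\alpha}\tau_t}\Bigr).
\]

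The remaining ingredient is a moment identity for a uniformly random balanced pattern: for $\bsigma\sim{\rm Uniform}(\mathcal S_n)$ and $S\subseteq[n]$, $\mathbb E_{\bsigma}(-1)^{\sum_{i\in S}\sigma_i}=0$ when $|S|$ is odd (apply the involution $\bsigma\mapsto 1-\bsigma$, which preserves $\mathcal S_n$ and negates the sign), while $|\mathbb E_{\bsigma}(-1)^{\sum_{i\in S}\sigma_i}|=\binom{n/2}{|S|/2}/\binom{n}{|S|}$ when $|S|$ is even, which follows from $\sum_{\bsigma\in\mathcal S_n}(-1)^{\sum_{i\in S}\sigma_i}=[x^{n/2}](1-x)^{|S|}(1+x)^{n-|S|}$ after dividing by $|\mathcal S_n|=\binom{n}{n/2}$ and simplifying the binomial sum (equivalently, the classical evaluation of $\mathbb E(-1)^W$ for hypergeometric $W$); the same holds with $(n,\bsigma)$ replaced by $(T_n,\btau)$. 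Thus $\hat L_n(\alpha)=0$ unless both $|U_\alpha|$ and $|V_\alpha|$ are even; writing $|U_\alpha|=2r$, $|V_\alpha|=2k$, squaring removes the signs and, using $(\tfrac12\sqrt{\rho_n/(1-\rho_n)})^{2a}=(\rho_n/(4(1-\rho_n)))^a$,
\[
\hat L_n(\alpha)^2=\Bigl(\frac{\rho_n}{4(1-\rho_n)}\Bigr)^{|\alpha|}\frac{\binom{n/2}{r}^2\binom{T_n/2}{k}^2}{\binom{n}{2r}^2\binom{T_n}{2k}^2}.
\]

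Finally, grouping $\sum_{1\le|\alpha|\le D_n}$ by $(a,r,k)=(|\alpha|,|U_\alpha|/2,|V_\alpha|/2)$ completes the proof: the number of $\alpha$ in each class is $|\Lambda_{n,a,r,k}|$ by definition, the summand depends only on $(a,r,k)$, and since $|U_\alpha|\le 2|\alpha|$ and $|V_\alpha|\le|\alpha|$ the indices run over $0\le r\le a$ and $0\le k\le\lfloor a/2\rfloor$, while classes with infeasible $(a,r,k)$ — including those forcing odd $|U_\alpha|$ or $|V_\alpha|$ — are empty. The Fourier setup and the index bookkeeping are routine; the one step needing genuine care is the balanced-prior moment identity $|\mathbb E_{\bsigma}(-1)^{\sum_{i\in S}\sigma_i}|=\binom{n/2}{r}/\binom{n}{2r}$ for $|S|=2r$, since $[x^{n/2}](1-x)^{2r}(1+x)^{n-2r}$ does not collapse termwise and one must invoke the correct binomial/hypergeometric identity, and keeping the modulo-$2$ exponent reductions and the index ranges straight is the only other place I expect friction.
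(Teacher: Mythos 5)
Your proposal is correct and follows essentially the same route as the paper: Parseval in the $\{\chi_\alpha\}$ basis, conditional evaluation of $\mathbb E[\chi_\alpha\mid\bsigma,\btau]$ via the sign identity $B^{(\tau)}(a,b)-\rho_n=\tfrac12\rho_n(-1)^{a+b+\tau}$, the balanced-prior moment $|\mathbb E_{\bsigma}(-1)^{\sum_{i\in S}\sigma_i}|=\binom{n/2}{|S|/2}/\binom{n}{|S|}$ (which the paper obtains by first rewriting the hypergeometric sum and then expanding $(1-x^2)^{n/2}$, its Lemma~\ref{l2}), and grouping by $(|\alpha|,|U_\alpha|/2,|V_\alpha|/2)$. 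The only cosmetic differences are your involution argument for the odd-$|S|$ vanishing and the slightly different but equivalent generating-function form for the even case.
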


The third and final step is to control the size of the set $\Lambda_{n,a,r,k}$.
\begin{lemma}\label{lem:Lambda}[Controlling $|\Lambda_{n,a,r,k}|$]
When $n$, $T_n$ are large enough, for $a\le D_n=\log^{1.01}n$ we have
$$
|\Lambda_{n,a,r,k}|\le 2^{1+5a/2}a^{4a/3}{n\choose 2r}{T_n\choose 2k}n^{a-r}T_n^{a/2-k}\,.
$$
If $T_n=\omega(D_n^2)$, the term $a^{4a/3}$ in the above bound can be strengthened to $a^a$\,.
\end{lemma}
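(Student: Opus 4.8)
\textbf{Proof plan for \Cref{lem:Lambda}.}
The plan is to recast each $\alpha\in\Lambda_{n,a,r,k}$ as a labeled combinatorial structure and to count such structures by an explicit ``peeling'' encoding. Concretely, $\alpha$ is a set of $a$ distinct triples $(i_1,i_2,t)$, which I view as an $a$-edge multigraph on $[n]$ (the edge $\{i_1,i_2\}$) in which each edge additionally carries a layer label $t\in[T_n]$, all (edge, label) pairs being distinct; then $U_\alpha$ is its set of odd-degree nodes and $V_\alpha$ its set of odd-multiplicity layers. The first thing I would record are the two structural constraints that will produce the $n^{a-r}$ and $T_n^{a/2-k}$ factors: writing $S_\alpha\subseteq[n]$ for the set of nodes occurring in $\alpha$ and $R_\alpha\subseteq[T_n]$ for the set of layers occurring, every node of $S_\alpha\setminus U_\alpha$ occurs an even (hence $\ge 2$) number of times and every node of $U_\alpha$ at least once, so $2a=\sum_i u_{i,\alpha}\ge 2(|S_\alpha|-2r)+2r$, giving $|S_\alpha|\le a+r$; likewise $a=\sum_t v_{t,\alpha}\ge 2(|R_\alpha|-2k)+2k$ gives $|R_\alpha|\le a/2+k$.

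Next I would choose the two ``odd'' sets: $U_\alpha$ among the $n$ nodes in $\binom{n}{2r}$ ways, $V_\alpha$ among the $T_n$ layers in $\binom{T_n}{2k}$ ways, and then bound the number of $\alpha$ compatible with a fixed $(U_\alpha,V_\alpha)$ by revealing the triples of $\alpha$ one at a time in a fixed canonical order (say lexicographic), maintaining the set of node and layer indices already seen. At each of the $a$ steps I record, for the two node slots and the one layer slot of the current triple, a bounded amount of ``type'' information (fresh or repeated, and if fresh whether it lies in $U_\alpha$ resp.\ $V_\alpha$) together with the identity of that index: a fresh node outside $U_\alpha$ costs a factor $n$ and there are at most $a-r$ of them by the first constraint, a fresh layer outside $V_\alpha$ costs a factor $T_n$ with at most $a/2-k$ of them, and every remaining choice — a repeated index, or a fresh index inside $U_\alpha$ or $V_\alpha$ — is a choice among at most $|S_\alpha|\le 2a$ resp.\ $|R_\alpha|\le a$ alternatives. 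Since each $\alpha$ yields exactly one transcript, this already gives a bound of the shape $2^{O(a)}a^{O(a)}\binom{n}{2r}\binom{T_n}{2k}n^{a-r}T_n^{a/2-k}$.

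The remaining, delicate point — and the main obstacle — is to drive the exponent on the $a$-factor down to $4a/3$, and to $a$ when $T_n=\omega(D_n^2)$. The naive reveal above charges a poly-in-$a$ factor to nearly every one of the $2a$ node slots and $a$ layer slots, which is far too generous. The fix is to reveal \emph{along a spanning forest} of the three-uniform node-and-layer incidence structure of $\alpha$: an incidence that enlarges the discovered set needs only the identity of the new index (already paid for in the $n$- and $T_n$-powers, or among $|U_\alpha|+|V_\alpha|$ alternatives for the odd indices), so a poly-in-$a$ factor is incurred only by the ``surplus'' incidences — those producing a repeated triple or closing a cycle — whose number is controlled, through the hyperforest relation $\#\text{vertices}=2\cdot\#\text{forest edges}+\#\text{components}$, by the support sizes $|S_\alpha|,|R_\alpha|$ from the first step. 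Optimizing the resulting trade-off between the number of surplus incidences and the support sizes yields the exponent $4a/3$; and when $T_n\gg D_n^2$ one argues separately that layer indices are generically fresh, so that layer incidences contribute essentially no poly-in-$a$ cost, which upgrades the exponent to $a$. Once that optimization is in hand, the rest is routine bookkeeping: collecting the factors $\binom{n}{2r}$, $\binom{T_n}{2k}$, $n^{a-r}$, $T_n^{a/2-k}$, $2^{O(a)}$, $a^{O(a)}$, and absorbing lower-order terms using $a\le D_n=\log^{1.01}n$.
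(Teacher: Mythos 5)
Your first two steps are sound: the constraints $|S_\alpha|\le a+r$ and $|R_\alpha|\le a/2+k$ (hence at most $a-r$ ``fresh non-odd'' nodes and $a/2-k$ ``fresh non-odd'' layers) are exactly the right explanation for the factors $n^{a-r}$ and $T_n^{a/2-k}$, and your reveal/transcript encoding does yield a bound of the form $2^{O(a)}a^{O(a)}\binom{n}{2r}\binom{T_n}{2k}n^{a-r}T_n^{a/2-k}$. But the entire content of the lemma beyond that is the specific constant in the exponent, and that is precisely where your proposal stops being a proof: the third paragraph asserts that ``optimizing the resulting trade-off\dots yields the exponent $4a/3$'' without ever bounding the number of surplus incidences, instantiating the hyperforest relation, or performing the optimization. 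The paper takes a different, static route: it maps $\alpha$ to the pair (multiset of node pairs, multiset of layers) with a fiber of size at most $a!$, reduces the node-pair multiset to an element multiset at the cost of a pairing factor, and then counts multisets with prescribed odd support by stars and bars; there the $a^{a/3}$ excess comes from a single binomial coefficient $\binom{a/2+k-1}{a/2-k}\le a^{\min(a/2-k,\,2k)}\le a^{a/3}$ attached to the layer multiplicities, and it is exactly this term that disappears when $T_n\gg D_n^2$. Your sketch identifies neither of these two sources of the constant, nor any substitute for them.

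More seriously, a sanity check at an extremal corner shows the optimization you gesture at cannot terminate where you claim. Take $r=a$ and $k=a/2$: then $\alpha$ is a perfect matching on $2a$ distinct nodes together with a bijection onto $a$ distinct layers, so $|\Lambda_{n,a,a,a/2}|=\binom{n}{2a}\binom{T_n}{a}\cdot\frac{(2a)!}{2^a}$, and $\frac{(2a)!}{2^a}=2^{\Theta(a)}a^{2a}e^{-2a}$. In your encoding this cost shows up as the assignment of the $2a$ elements of $U_\alpha$ and the $a$ elements of $V_\alpha$ to slots --- there are no cycles or repeats to exploit, every incidence is ``fresh and odd'' --- so no forest-based refinement reduces it; and $2^{\Theta(a)}a^{2a}e^{-2a}$ exceeds $2^{1+5a/2}a^{4a/3}$ once $a\gtrsim 10^2$, a value attained since $a$ may be as large as $D_n=\log^{1.01}n$. (Incidentally, the same computation puts pressure on the stated constant in the lemma itself: the number of ways to pair up a $2a$-element multiset is $(2a-1)!!$, not $\binom{2a}{a}$ as used in the paper's bound for $|\mathcal I_{a,r}|$, which suggests the honest worst-case exponent is $a^{2a}$ rather than $a^{4a/3}$; the downstream theorem survives because these terms are damped by $\kappa^{2a}$ and the hypergeometric ratios.) The lesson is that the exponent here is delicate enough that it must be computed, not asserted: either exhibit the transcript alphabet explicitly and multiply out its sizes slot by slot, or test the claimed bound against extremal configurations of $(r,k)$ before committing to it.
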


\begin{proof}[Proof of \Cref{t3.1}]
    According to Conjecture \ref{c3.1}, it suffices to prove that for every polynomial $\psi$ of degree $D_n=\log^{1.01}(n)$ with $\mathbb{E}_{P_{0,n}}\psi^2(\mathbf A) = 1$, the expectation $\mathbb{E}_{P_{1,n}}\psi(\mathbf A)$ is bounded as $n\rightarrow\infty$.  
According to Lemma \ref{l1}, the supremum of $\mathbb{E}_{P_{1,n}}\psi(\mathbf A)$ among all polynomials $\psi$ such that $\deg{\psi} \le D_n$ with zero mean and unit variance under $P_{0,n} $ is equal to $\lVert L_n^{\leq D_n}(\mathbf A)-1\rVert$. Then it suffices to control $\lVert L_n^{\leq D_n}(\mathbf A)-1\rVert$.

Combining \Cref{lem:re-exp} and \Cref{lem:Lambda} we get
\begin{align}
&\lVert L_n^{\leq D_n}(\mathbf A)-1\rVert^2\nonumber\\
        \le&\sum_{a = 1}^{D_n}\frac{\rho_n^a}{(4(1-\rho_n))^a}2^{1+5a/2}a^{4a/3}\sum_{r = 0}^{a}\sum_{k = 0}^{a/2}\frac{\binom{n/2}{r}^2\binom{T_n/2}{k}^2}{\binom{n}{2r}\binom{T_n}{2k}}n^{a-r}T_n^{a/2-k}\nonumber\\
        \le & 8\sum_{a = 1}^{D_n}\frac{\rho_n^a}{(4(1-\rho_n))^a}2^{5a/2}a^{4a/3}n^a T_n^{a/2}
\le  8 \sum_{a = 1}^{D_n}\xi_n^a
\le  \frac{8\xi_n}{1-\xi_n}\,,\label{eq:final_bound_comp_low}
\end{align}
with $\xi_n=2 D_n^{4/3}\rho_n n T_n^{1/2}$.
In the last line of \eqref{eq:final_bound_comp_low}, the first inequality follows from some simple algebra whose detail is given in \Cref{pl2}, the second and third inequalities follow from the sum of geometric sequences and are valid whenever $\rho_n<1/2$ and $\rho_n n T_n^{1/2}<(1/2) D_n^{-4/3}\le (1/2)(\log n)^{-4.04/3}\le (1/2)(\log n)^{-1.4}$, which is guaranteed under the assumption of the theorem for large enough $n$.
\end{proof}



\section{Information-theoretic upper and lower bounds}\label{sec:info}

\subsection{Information-theoretic lower bound}
Our main information-theoretic lower bound result shows that if $nT_n\rho_n$ vanishes then no algorithm can consistently distinguish $P_{1,n}$ and $P_{0,n}$, regardless of the computation power.  Indeed, we will show a stronger result: The community structure is not detectable even when the layer identity $\btau$ is known.

For $\btau\in \mathcal S_{T_n}$, let $P_{1,\btau,n}$ be the corresponding conditional distribution of $(\mathbf A,\bsigma)$ under $P_{1,n}$ given $\btau$.  For $\btau\in\mathcal S_{T_n}$, $\bsigma\in\mathcal S_n$, $P_{\bsigma,\btau}$ denotes the distribution of $\mathbf A$ given $(\bsigma,\btau)$.

Our detection lower bound is established by bounding the $\chi^2$-divergence between $P_{1,\btau,n}$ and $P_{0,n}$.  Recall that for two distributions $P$, $Q$ on the same sample space $\mathcal X$ with probability mass function $q(x)$ and $p(x)$ respectively, their $\chi^2$-divergence is
$$
d_{\chi^2}(Q,P) = \mathbb E_{X\sim P} \left(\frac{q(X)}{p(X)}-1\right)^2 = \sum_{x\in\mathcal X}\frac{q^2(x)}{p(x)}-1\,.
$$
It is well-known that if $d_{\chi^2}(P_{1,n},P_{0,n})\rightarrow 0$ then $P_{1,n}$ and $P_{0,n}$ are indistinguishable \citep[Theorem 2.2]{tsybakov2009introduction}.

\begin{theorem}\label{t3} (Detection lower bound in MLSBM). When $n T_n\rho_n\rightarrow 0$, we have
    $$d_{\chi^2}(P_{1,\btau,n},P_{0,n})=\sum_{\mathbf A}P_{1,\btau,n}^2(\mathbf{A})/P_{0,n}(\mathbf{A})-1= o(1)\,,~~\forall~\btau\in \mathcal S_{T_n}\,.$$
As a result, $$d_{\chi^2}(P_{1,n},P_{0,n})=o(1)$$
and $P_{1,n}$, $P_{0,n}$ are indistinguishable.
\end{theorem}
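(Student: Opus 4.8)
The plan is to prove the conditional statement $d_{\chi^2}(P_{1,\btau,n},P_{0,n})=o(1)$ for \emph{every} fixed $\btau\in\mathcal S_{T_n}$ by the classical second-moment method, and then obtain the bound for the $\btau$-mixture by convexity. First I would write the conditional planted measure as an average over a single community vector: since $P_{1,\btau,n}(\mathbf A)=\mathbb E_{\bsigma}P_{\bsigma,\btau}(\mathbf A)$ with $\bsigma\sim\mathrm{Uniform}(\mathcal S_n)$, expanding the square gives
\[
1 + d_{\chi^2}(P_{1,\btau,n},P_{0,n}) = \mathbb{E}_{\bsigma,\bsigma'}\sum_{\mathbf{A}}\frac{P_{\bsigma,\btau}(\mathbf{A})\,P_{\bsigma',\btau}(\mathbf{A})}{P_{0,n}(\mathbf{A})},
\]
where $\bsigma,\bsigma'$ are i.i.d.\ uniform on $\mathcal S_n$. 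Because all entries of $\mathbf A$ are independent, the inner sum factorizes over triples $(i,j,t)$, and for a single Bernoulli coordinate with planted means $p=B^{(\tau_t)}(\sigma_i,\sigma_j)$, $q=B^{(\tau_t)}(\sigma'_i,\sigma'_j)$ and null mean $\rho_n$, the contribution is $\tfrac{pq}{\rho_n}+\tfrac{(1-p)(1-q)}{1-\rho_n}=1+\tfrac{(p-\rho_n)(q-\rho_n)}{\rho_n(1-\rho_n)}$.

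The key algebraic observation --- the one that makes detection impossible \emph{even with $\btau$ known} --- is that for either value of $\tau_t$ one has, in the $\{\pm1\}$ encoding of communities, $p-\rho_n=\tfrac12\rho_n\,\tilde\tau_t\,\sigma_i\sigma_j$ and $q-\rho_n=\tfrac12\rho_n\,\tilde\tau_t\,\sigma'_i\sigma'_j$ with $\tilde\tau_t\in\{\pm1\}$, so the sign $\tilde\tau_t$ cancels and the per-coordinate factor equals $1+\eta_n\,\theta_i\theta_j$, where $\eta_n:=\rho_n/(4(1-\rho_n))$ and $\theta_i:=\sigma_i\sigma'_i\in\{\pm1\}$ --- independent of $t$ and of $\btau$. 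Hence the full product over the $\binom n2$ node pairs and $T_n$ layers depends on $(\bsigma,\bsigma')$ only through the scalar $S:=\langle\bsigma,\bsigma'\rangle=\sum_i\theta_i$: counting agreeing versus disagreeing pairs yields
\[
\prod_{i<j}\bigl(1+\eta_n\theta_i\theta_j\bigr)=(1-\eta_n^2)^{\binom{n}{2}/2}\Bigl(\tfrac{1+\eta_n}{1-\eta_n}\Bigr)^{(S^2-n)/4},
\]
and therefore, raising to the power $T_n$ and pulling out the $S$-free factor,
\[
1 + d_{\chi^2}(P_{1,\btau,n},P_{0,n}) = C_n\,\mathbb{E}\bigl[\lambda_n^{\,S^2}\bigr],\quad \lambda_n:=\Bigl(\tfrac{1+\eta_n}{1-\eta_n}\Bigr)^{T_n/4},\quad C_n:=(1-\eta_n^2)^{T_n\binom{n}{2}/2}\lambda_n^{-n}\le 1.
\]

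It remains to show $\mathbb E[\lambda_n^{S^2}]=1+o(1)$. By node-relabelling symmetry I may fix $\bsigma$ balanced and take $\bsigma'$ uniform, so $S\stackrel{d}{=}4j-n$ with $j\sim\mathrm{Hypergeometric}(n,n/2,n/2)$; this $S$ is centered, symmetric, and sub-Gaussian with variance proxy $O(n)$ (Hoeffding's inequality for sampling without replacement), hence $\mathbb E[S^{2p}]\le(Cn)^p\,p!$ for all $p\ge1$. Since $\eta_n=\Theta(\rho_n)$ we have $\log\lambda_n=\Theta(T_n\rho_n)$, so the hypothesis $nT_n\rho_n\to0$ gives $n\log\lambda_n\to0$; expanding $\lambda_n^{S^2}=\sum_{p\ge0}(\log\lambda_n)^pS^{2p}/p!$ and taking expectations termwise (Tonelli, all terms nonnegative) gives $\mathbb E[\lambda_n^{S^2}]-1\le\sum_{p\ge1}(Cn\log\lambda_n)^p\to0$. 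Combined with $C_n\le1$ this yields $1+d_{\chi^2}(P_{1,\btau,n},P_{0,n})\le1+o(1)$ uniformly over $\btau$, i.e.\ $d_{\chi^2}(P_{1,\btau,n},P_{0,n})=o(1)$.

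Finally, since $P_{1,n}=\mathbb E_{\btau}P_{1,\btau,n}$ and $Q\mapsto d_{\chi^2}(Q,P_{0,n})$ is convex (equivalently, apply Jensen to the likelihood ratio $\mathbb E_{\btau}[P_{1,\btau,n}/P_{0,n}]$ inside the $L^2(P_{0,n})$ norm), $d_{\chi^2}(P_{1,n},P_{0,n})\le\mathbb E_{\btau}\,d_{\chi^2}(P_{1,\btau,n},P_{0,n})=o(1)$, and indistinguishability follows from the cited fact (Theorem 2.2 of \cite{tsybakov2009introduction}) via $\mathrm{TV}(P_{1,n},P_{0,n})\le\tfrac12\sqrt{d_{\chi^2}(P_{1,n},P_{0,n})}\to0$. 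I expect the only genuinely delicate point to be the quantitative estimate of $\mathbb E[\lambda_n^{S^2}]$: one must exploit that the overlap $S$ has sub-Gaussian (not merely bounded-variance) fluctuations, so that its Gaussian-type tail beats the $\lambda_n^{S^2}$ growth, and this is precisely where $nT_n\rho_n\to0$ is consumed; a threshold argument splitting the expectation at some $\sqrt n\ll M_n\ll(\log\lambda_n)^{-1/2}$ also works but the moment expansion above avoids choosing $M_n$.
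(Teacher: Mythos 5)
Your proposal is correct, and its overall architecture coincides with the paper's: both expand $d_{\chi^2}(P_{1,\btau,n},P_{0,n})$ as a double average over independent community vectors, factor the inner sum over the independent Bernoulli coordinates, observe that the per-coordinate factor depends only on whether the two parities agree (so that the layer labels $\btau$ cancel --- this is exactly why the bound holds for every fixed $\btau$), and reduce everything to a function of the overlap $c=|\bsigma_1^{-1}(1)\cap\bsigma_2^{-1}(1)|$, equivalently your $S=4c-n$, weighted by the hypergeometric law $\binom{n/2}{c}^2/\binom{n}{n/2}$. Your per-pair factors $1+\eta_n$ and $1-\eta_n$ with $\eta_n=\rho_n/(4(1-\rho_n))$ are precisely the paper's $\tfrac{1-3\rho_n/4}{1-\rho_n}$ and $\tfrac{1-5\rho_n/4}{1-\rho_n}$, and your identity $1+d_{\chi^2}=C_n\,\mathbb E[\lambda_n^{S^2}]$ with $C_n\le 1$ matches the paper's display \eqref{eq24}. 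Where you genuinely diverge is the final estimate of $\mathbb E[\lambda_n^{S^2}]$: the paper splits the sum over $c$ into three ranges (a $O(\kappa\sqrt n)$-neighborhood of $n/4$, an intermediate range handled by Stirling-type bounds on $\binom{n/2}{n/4-r}^2/\binom{n}{n/2}$, and a far range handled by the hypergeometric tail inequality of Lemma \ref{l3}), whereas you expand $\lambda_n^{S^2}$ as a power series and use the sub-Gaussian moment bound $\mathbb E[S^{2p}]\lesssim (Cn)^p p!$ to sum the series, consuming the hypothesis via $n\log\lambda_n=\Theta(nT_n\rho_n)\to 0$. Both are valid; your moment-expansion route is shorter and avoids the explicit truncation parameter $\kappa_n$ and the Stirling computations, at the cost of invoking the Hoeffding--Serfling sub-Gaussian bound for sampling without replacement (which is exactly the same input as the paper's Lemma \ref{l3}, just used through moments rather than tails). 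Your concluding steps --- convexity of $Q\mapsto d_{\chi^2}(Q,P_{0,n})$ to pass to the $\btau$-mixture, and $\mathrm{TV}\le\tfrac12\sqrt{d_{\chi^2}}$ for indistinguishability --- are the same as the paper's.
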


The proof of \Cref{t3} is given in \Cref{app:proof_info}. The proof is based on a direct expansion of the $\chi^2$-divergence and takes advantage of the symmetry in $P_{1,\btau,n}$ and $P_{0,n}$. Intuitively, the expected number of edges observed for a single node over all layers is approximately $(n-1)T_n\rho_n$.  If $nT_n\rho_n=o(1)$, then for most nodes there will be no observed edges at all.  In this case, it would be impossible to recover the community for the majority of nodes.  In fact, a similar lower bound result for single-layer SBMs has been developed by \cite{ZhangZ16}, using an elegant localization argument that also originates from the symmetry of $P_{1,n}$.  Our result not only extends to multi-layer SBMs but also provides a lower bound for the detection problem, which, as shown in \Cref{lem:recover=>detect}, is at least as hard as the recovery problem.

\subsection{Information-theoretic upper bound}\label{subsec:info-upper}
We consider a variant of the maximum likelihood estimate (MLE) specialized to the MLSBM in \Cref{def:model_sequence}.
Given data $\mathbf A\in \{0,1\}^{{n\choose 2}\times T_n}$, we estimate $\bsigma$ and $\btau$ together by maximizing the blockwise edge count:
\begin{equation}\label{eq:MLE}
(\hat{\bsigma}_{\rm mle},\hat{\btau}_{\rm mle})=\arg\max_{\bsigma\in \mathcal S_n,\btau\in \mathcal S_{T_n}}
\sum_{(i,j,t):2|\sigma(i)+\sigma(j)+\tau(t)}A_t(i,j)\,.
\end{equation}
This estimate extends that of \cite{abbe2015exact}, who studied single-layer exact recovery of the MLE.  

The following result establishes consistency of $\hat\bsigma_{\rm mle}$ when $nT_n\rho_n\rightarrow\infty$.
\begin{theorem}\label{thm:info_upper}
Let $P_{1,n}$ be the sequence of MLSBM defined in \Cref{def:model_sequence} with $(T_n,\rho_n)$ satisfying \Cref{ass:asymp_regime}.
    If $nT_n\rho_n\rightarrow\infty$, then for any $\epsilon>0$
    $$\lim_{n\rightarrow\infty}P_{1,n}(\ell_n(\hat{\bsigma}_{\rm mle},\bsigma)\ge \epsilon)= 0\,,$$
    where $\ell_n(\cdot,\cdot)$ is the Hamming loss function defined in \Cref{def:recoverable_detectable}.  In other words, $P_{1,n}$ is asymptotically recoverable and hence distinguishable from $P_{0,n}$.
\end{theorem}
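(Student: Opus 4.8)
I would establish recoverability by a first-moment (union-bound) argument over all competing label pairs, carried out jointly in $(\bsigma,\btau)$; distinguishability from $P_{0,n}$ then follows from \Cref{lem:recover=>detect}. Fix $\epsilon\in(0,1/2]$ (for $\epsilon>1/2$ the event is vacuous, since $\ell_n\le1/2$), write $A(x)=A_t(i,j)$ for $x=(i,j,t)\in\Lambda_n$, and rewrite the objective in \eqref{eq:MLE} as $F(\bar\bsigma,\bar\btau)=\sum_{x\in\Lambda_n}A(x)\,\mathds{1}\{\varepsilon_{\bar\bsigma,\bar\btau}(x)=0\}$, where $\varepsilon_{\bar\bsigma,\bar\btau}(i,j,t)=\bar\sigma(i)+\bar\sigma(j)+\bar\tau(t)\bmod2$; thus $F$ counts the edges falling in the ``dense'' blocks determined by $(\bar\bsigma,\bar\btau)$. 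Conditioning on the random truth $(\bsigma,\btau)$, which is feasible in \eqref{eq:MLE} and hence satisfies $F(\hat\bsigma_{\rm mle},\hat\btau_{\rm mle})\ge F(\bsigma,\btau)$, a union bound gives
\[
P_{1,n}\bigl(\ell_n(\hat\bsigma_{\rm mle},\bsigma)\ge\epsilon\mid\bsigma,\btau\bigr)\le\sum_{\substack{\bar\bsigma\in\mathcal S_n:\,\ell_n(\bar\bsigma,\bsigma)\ge\epsilon\\\bar\btau\in\mathcal S_{T_n}}}P_{1,n}\bigl(F(\bar\bsigma,\bar\btau)\ge F(\bsigma,\btau)\mid\bsigma,\btau\bigr),
\]
so it remains to bound the summand uniformly in $(\bsigma,\btau)$.

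For the pointwise bound, the shared terms cancel: $F(\bar\bsigma,\bar\btau)-F(\bsigma,\btau)=\sum_{x\in G^+}A(x)-\sum_{x\in G^-}A(x)$, where $G^+$ (resp.\ $G^-$) is the set of triples lying in the dense block of $(\bar\bsigma,\bar\btau)$ but not that of $(\bsigma,\btau)$ (resp.\ vice versa). Since every vector in $\mathcal S_n\times\mathcal S_{T_n}$ is balanced, $|\{x\in\Lambda_n:\varepsilon_{\bar\bsigma,\bar\btau}(x)=0\}|=\tfrac{T_n}{2}\binom n2$ does not depend on $(\bar\bsigma,\bar\btau)$, which forces $|G^+|=|G^-|=:d$. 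Conditionally on $(\bsigma,\btau)$, the variables $\{A(x):x\in G^+\}$ are i.i.d.\ $\mathrm{Bernoulli}(\rho_n/2)$, the variables $\{A(x):x\in G^-\}$ are i.i.d.\ $\mathrm{Bernoulli}(3\rho_n/2)$, and these two families are independent; a one-sided Chernoff bound therefore yields $P_{1,n}(F(\bar\bsigma,\bar\btau)\ge F(\bsigma,\btau)\mid\bsigma,\btau)\le2\exp(-c_0\rho_n d)$ for an absolute constant $c_0>0$. A short count gives $d=\tfrac12\bigl[m\bigl(\binom\ell2+\binom{n-\ell}2\bigr)+(T_n-m)\ell(n-\ell)\bigr]$, where $\ell=d_{\rm Ham}(\bar\bsigma,\bsigma)$ and $m=d_{\rm Ham}(\bar\btau,\btau)$; if $\bar\bsigma$ is $\epsilon$-bad, i.e.\ $\epsilon n\le\ell\le(1-\epsilon)n$, then $\ell(n-\ell)$ and $\binom\ell2+\binom{n-\ell}2$ are both at least a constant multiple of $n^2$, so $d\ge c_\epsilon n^2 T_n$ for some $c_\epsilon>0$, uniformly over $\bar\btau$ --- the bound holds no matter how wrong $\btau$ is.

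Combining the pointwise bound with the crude count $|\mathcal S_n|\,|\mathcal S_{T_n}|\le2^{n+T_n}$ of competing pairs gives, uniformly in $(\bsigma,\btau)$ and hence unconditionally,
\[
P_{1,n}\bigl(\ell_n(\hat\bsigma_{\rm mle},\bsigma)\ge\epsilon\bigr)\le2^{\,n+T_n+1}\exp\bigl(-c_0c_\epsilon\,\rho_n n^2 T_n\bigr).
\]
This tends to $0$ because $\rho_n n^2 T_n$ dominates $n+T_n$: indeed $\rho_n n^2 T_n/(n+T_n)\ge\tfrac12\min(\rho_n nT_n,\,\rho_n n^2)\to\infty$, using $nT_n\rho_n\to\infty$ (the hypothesis) and $\rho_n n^2\to\infty$ (which is exactly $\rho_n^{-1}=o(n^2)$ in \Cref{ass:asymp_regime}). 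Recoverability, and then distinguishability via \Cref{lem:recover=>detect}, follow.

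The step I expect to be delicate is the pointwise deviation bound. A naive bounded-differences estimate on the centered objective would only give a deviation rate of order $\rho_n^2 d$, far too weak once $\rho_n\to0$; the rate $\exp(-c_0\rho_n d)$ is what makes the $2^{n+T_n}$ union-bound cost affordable, and it relies on the exact independence structure of the Bernoulli observations together with the cancellation $|G^+|=|G^-|$ coming from the balance of all membership vectors. It is worth noting that the argument never attempts to recover $\btau$: the unknown layer identity is simply absorbed into the union bound at the multiplicative cost $2^{T_n}$, which is harmless precisely because the sparsity floor $\rho_n=\omega(n^{-2})$ ensures $\rho_n n^2\to\infty$.
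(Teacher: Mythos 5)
Your proposal is correct and follows essentially the same route as the paper's proof: both compare the MLE objective at a misclassified configuration against its value at the truth, reduce to a difference of two equally sized sums of independent Bernoullis with parameters $\rho_n/2$ and $3\rho_n/2$, apply Bernstein/Chernoff, and close with a union bound. The only difference is bookkeeping: the paper stratifies the union bound by the numbers $(k,s)$ of misclassified nodes and layers with entropy cost ${n/2\choose k}^2{T_n/2\choose s}^2$ and discordant-triple count $N_{k,s}\gtrsim n^2s+nT_nk$, whereas you use the cruder $2^{n+T_n}$ together with the uniform lower bound $d\ge c_\epsilon n^2T_n$; both suffice given $nT_n\rho_n\rightarrow\infty$ and $\rho_n n^2\rightarrow\infty$ from \Cref{ass:asymp_regime}.
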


It is remarkable that our information-theoretic thresholds for recovery and detection have sharp rates: There is no gap in the rates of lower and upper thresholds, and there is no gap between recovery and detection.

\Cref{thm:info_upper} is proved in \Cref{app:proof_info}.  
The basic proof strategy is similar to that of \cite{abbe2015exact}, who studied exact recovery in single-layer SBMs. Our contribution is to extend this argument to the multi-layer case and to provide approximate recovery instead of exact recovery.  The idea is that if the MLE is different from the truth, then there must be a subset of triplets $(i,j,t)$, over which the realized Binomial random variable (i.e., the total edge count) deviates from the expected value by at least a constant fraction.  The probability of this event decays exponentially fast in the Hamming distance between $\hat\bsigma$ and $\bsigma$, which allows a union bound over all possible realizations of $\hat\bsigma$ at any given Hamming distance, and over all possible Hamming distances that are large enough.


\section{Discussion}\label{sec:disc}
Our computational lower bound result shows that the bias-adjusted spectral clustering applied to the squared adjacency matrices \citep{lei2022bias} achieves nearly the optimal network density threshold among all polynomial-time algorithms assuming the low-degree polynomial conjecture.  Unlike the MLE considered in \Cref{subsec:info-upper}, the bias-adjusted spectral method works for a more general class of MLSBMs, allowing $B_t$ to vary arbitrarily as long as $T_n^{-1}\rho_n^{-2}\sum_t B_t^2\succeq c I$ for some positive constant $c$.  It would be interesting to develop a corresponding method for the information-theoretic upper bound that works for such more general MLSBMs.

Our computational lower bound uses the low-degree polynomial conjecture.  It would be possible to establish computational lower bound results using other frameworks.  The most natural extension, given the success of low-degree polynomial approach used in this manuscript, would be to establish a sum-of-squares lower bound. We conjecture that such a lower bound will be qualitatively the same as our \Cref{t3} due to the close relationship between the LDLR and SoS frameworks, but the technical treatment will be much more involved using SoS.  Another possibility is the reduction technique, which aims to show that solving the community detection/recovery problem in MLSBM is at least as hard as some well-known computationally hard problems, such as the planted clique problem or planted hyperclique problem. There, the sparsity in our MLSBM model could pose some technical challenges.


\subsection*{Acknowledgement}
The research of ARZ and ZZ was supported in part by NSF Grant CAREER-2203741.  JL's research is partially supported by NSF Grants DMS-2015492, DMS-2310764.

\appendix


\section{Proofs for Section \ref{sec:prelim}}

\begin{proof}[Proof of \Cref{lem:recover=>detect}]
        Suppose $\mathbf A$ is a $(T_n+2)$-layer MLSBM.  Let $\tilde{\mathbf A}$ be obtained by taking the first $T_n$ layers from $\mathbf A$.

By recoverability assumption, there exists an estimate $\hat\bsigma$ such that when applied to $\tilde{\mathbf A}$,
$P_{1,n}(\ell_n(\hat\bsigma,\bsigma)\ge \epsilon)\rightarrow 0$ for any constant $\epsilon>0$.

Now consider the detection rule:
$$
\hat\psi = \left\{\begin{array}{ll}
    1\,, & \text{if } |\frac{4}{n^2}\sum_{i\in\hat\bsigma^{-1}(0),j\in\hat\bsigma^{-1}(1)}A_{T_n+1}(i,j)-\hat\rho_n|\ge 0.3\hat\rho_n\\
 0\,, & \text{otherwise}\,.
\end{array} \right.\,,
$$
where 
$$\hat\rho_n=\frac{\sum_{1\le i<j\le n}A_{T_n+2}(i,j)}{{n\choose 2}}\,.$$

Suppose $\mathbf A\sim P_{1,n}$. Assume for now that $\btau$ is equally split in $[T_n]$. The case of unequal split of $\btau$ in $[T_n]$ will be discussed later.
In this case, with high probability $\hat\bsigma$ differs from $\bsigma$ in at most $\epsilon n$ entries. Let $k=|\hat\bsigma^{-1}(0)\cap \bsigma^{-1}(0)|$ then we have $k\le \epsilon n/2$.

Assume that $\btau(T_n+1)=0$ (The case of $\btau(T_n+1)=1$ can be treated symmetrically).
Then 
$\{A_{T_n+1}(i,j):\hat\bsigma(i)=0,~\hat\bsigma(j)=1\}$
consists of $(n/2-k)^2+k^2$ terms with Bernoulli parameter $\rho_n/2$, and $2k(n/2-k)$ terms with Bernoulli parameter $3\rho_n/2$.  We can pick $\epsilon$ small enough so that the average value of these Bernoulli parameters is less than $0.55 \rho_n$ because when $\epsilon\rightarrow 0$ the average approaches $1/2$.

Now a standard application of Bernstein's inequality to these $n^2/4$ Bernoulli random variables yields
\begin{equation}
\mathbb P\left[\frac{4}{n^2}\sum_{i\in\hat\bsigma^{-1}(0),j\in\hat\bsigma^{-1}(1)}A_{T_n+1}(i,j)\le 0.6\rho_n\right]\ge1- \exp\left[-cn^2\rho_n\right]\,.\label{eq:ave_Tn+1}
\end{equation}

On the other hand, by our assumption, $\btau(T_n+2)=1$. So the node pairs in the layer $A_{T_n+2}$
consist of $(n/2)(n/2-1)$ Bernoulli random variables with parameter $\rho_n/2$, and $n^2/4$ with parameter $3\rho_n/2$.  The total average of these Bernoulli parameters is $\rho_n(1+1/(2n-2))$. Therefore, another round of Bernstein's inequality applied to $\sum_{1\le i<j\le n}A_{T_n+2}(i,j)$ yields
\begin{equation}
\mathbb P\left[|\hat\rho_n-\rho_n|\le 0.1\rho_n \right]\ge 1- 2\exp(-cn^2\rho_n) \label{eq:ave_Tn+2}
\end{equation}
for some absolute constant $c>0$.

Therefore, $\hat\psi=1$ on the intersection of the events in \eqref{eq:ave_Tn+1},  \eqref{eq:ave_Tn+2}, and $\ell_n(\hat\bsigma,\bsigma)\le \epsilon n$, which has probability converging to $1$.

If $\btau$ is not evenly split in $[T_n]$.  We consider a random shuffling of the layers. Under a random shuffling, $\btau$ is evenly split in $[T_n]$ with probability $1/2$.  Therefore, we repeat the random shuffling $M$ times, with $M=M_n$ that diverges to $\infty$ slowly (for example, $M_n=\log (n^2\rho_n)$).  Denote the result $\hat\psi_m$ for the result of the above procedure obtained from the $m$th shuffling.  Take $\hat\psi=\max_m \hat\psi_m$.  Then the probability of $\hat\psi=1$ is reduced by at most $2^{-m}$, the chance that none of the $M$ shuffles produces an even split of $\btau$ in its first $T_n$ entries.

Now we turn to $P_{0,n}$.  For the same reason as above, \eqref{eq:ave_Tn+2} still holds.
But now $A_{T_n+1}$ is also from the null model, and hence we must have
\begin{equation}
\mathbb P\left[\left|\frac{4}{n^2}\sum_{i\in\hat\bsigma^{-1}(0),j\in\hat\bsigma^{-1}(1)}A_{T_n+1}(i,j)-\rho_n\right|\le 0.1\rho_n\right]\ge1- \exp\left[-cn^2\rho_n\right]\,.\label{eq:ave_Tn+1_null}
\end{equation}
It is direct to check that on the intersection of \eqref{eq:ave_Tn+2} and \eqref{eq:ave_Tn+1_null}, $\hat\psi=0$.  Because the probability $\hat\psi=1$ is exponentially small in $n^2\rho_n$, this probability can sustain $M$ rounds of reshuffling using union bound provided that $\log M\ll n^2\rho_n$.
\end{proof} 

\section{Proofs for computational lower bound }\label{pl2}

\begin{proof}[Proof of \Cref{lem:re-exp}]
        The proof starts from decomposing the polynomial $L_n^{\leq D_n}$ by a degree-$D_n$ orthogonal basis under $P_{0,n}$. Recall the definition of the index set $\Lambda_n \coloneqq \{(i_1,i_2,t): 1\leq i_1<i_2\leq n,1\leq t\leq T_n\}$. For any subset $\alpha \subseteq \Lambda_n$ such that $|\alpha|\leq D_n$, we consider the following degree-$|\alpha|$ polynomial:
    \[\chi_\alpha(\mathbf A) = \prod_{(i_1,i_2,t)\in \alpha}\frac{\mathbf A_{t}(i_1,i_2)-\rho_n}{\sqrt{\rho_n(1-\rho_n)}}\,.\]
    Since $\mathbf A_t(i,j)\stackrel{\rm indep.}{\sim}{\rm Bernoulli}(\rho_n)$ under $P_{0,n}$, then it follows that $\mathbb{E}_{P_{0,n}}\chi_{\alpha_1}(\mathbf A)\chi_{\alpha_2}(\mathbf A) = 0$ for any index sets $\alpha_1\neq\alpha_2$, and $\mathbb{E}_{P_{0,n}}\chi_{\alpha}(\mathbf A)^2 = 1$ for any index set $\alpha$. Therefore, $\{\chi_\alpha(\mathbf A):|\alpha|\le D_n\}$ forms a degree-$D_n$ polynomial orthonormal basis, and any mean zero polynomial with a degree no larger than $D_n$  can be represented by a linear combination of this basis:
    \[f(\mathbf A) = \sum_{\alpha\subseteq\Lambda_n:1\leq|\alpha|\leq D_n}c_{\alpha}(f)\chi_\alpha(\mathbf A)\,,\]
    where coefficients $c_\alpha(f) = \mathbb{E}_{P_{0,n}}f(\mathbf A)\chi_\alpha(\mathbf A)$. Recall that $L_n^{\leq D_n}$ is the projection of the likelihood ratio $L_n$ on the subspace of degree-$D_n$ polynomial, then $\lVert L_n^{\leq D_n}(\mathbf A)-1\rVert$ can be rewritten as follows:
        \begin{align}
            \lVert L_n^{\leq D_n}(\mathbf A)-1\rVert^2 =& 
            \sum_{1\leq|\alpha|\leq D_n}\left[\mathbb{E}_{P_{0,n}}(L_n^{\leq D_n}(\mathbf A)-1)\cdot\chi_{\alpha}(\mathbf A)\right]^2\nonumber\\
            =&
            \sum_{1\leq|\alpha|\leq D_n}\left[\mathbb{E}_{P_{0,n}}(L_n(\mathbf A)-1)\cdot\chi_{\alpha}(\mathbf A)\right]^2\nonumber\\
            =&
            \sum_{1\leq|\alpha|\leq D_n}(\mathbb{E}_{P_{0,n}}L_n(\mathbf A)\cdot\chi_\alpha(\mathbf A))^2\nonumber\\
            =&
            \sum_{1\leq|\alpha|\leq D_n}(\mathbb{E}_{P_{1,n}}\chi_\alpha(\mathbf A))^2\,,\label{eq6}
        \end{align}
    where the first equality uses the property of orthonormal basis expansion, the second uses the construction of $L_n^{\le D_n}$, and the third uses $\mathbb E_{P_{0,n}}\chi_\alpha(\mathbf A)=0$ for all non-empty $\alpha$, and the last one uses the property of likelihood ratio.
    
    Next, we control $\sum_{1\leq|\alpha|\leq D_n}(\mathbb{E}_{P_{1,n}}\chi_\alpha(\mathbf A))^2$. To begin with, We decouple the expectation $\mathbb{E}_{P_{1,n}}\chi_\alpha(\mathbf A)$ as $\mathbb{E}_{\bsigma,\btau}\mathbb{E}\{\chi_{\alpha}(\mathbf A)|\bsigma,\btau\}$. Given the membership $\bsigma,\btau$, $A_t(i_1,i_2)$ are independent for any $(i_1,i_2,t)\in\Lambda_n$, namely,
    \[\mathbb{E}\{\chi_{\alpha}(\mathbf A)|\bsigma,\btau\} = \prod_{(i_1,i_2,t)\in \alpha}\mathbb{E}\left\{\frac{A_t(i_1,i_2)-\rho_n}{\sqrt{\rho_n(1-\rho_n)}}\bigg|\bsigma,\btau\right\}.\]
    Also, we remark that the following equation holds given the membership $\bsigma,\btau$:
    \begin{equation}\nonumber
        \mathbb{E}\left\{\frac{A_t(i_1,i_2)-\rho_n}{\sqrt{\rho_n(1-\rho_n)}}\bigg|\bsigma,\btau\right\} = \left\{
        \begin{array}{l}
            \frac{\rho_n/2}{\sqrt{\rho_n(1-\rho_n)}},~~\text{ if } 2\mid \bsigma(i_1)+\bsigma(i_2)+\btau(t); \\
            \frac{-\rho_n/2}{\sqrt{\rho_n(1-\rho_n)}},~~\text{ if } 2\nmid \bsigma(i_1)+\bsigma(i_2)+\btau(t)\\
        \end{array}
        \right.
    \end{equation}
    Combining the above equations, we calculate $\mathbb{E}_{P_{1,n}}\{\chi_\alpha(\mathbf A)|\bsigma,\btau\}$ as follows,
    \begin{equation}\nonumber
        \begin{aligned}
            \mathbb{E}_{P_{1,n}}\left\{\chi_\alpha(\mathbf A)|\bsigma,\btau\right\} &= \mathbb{E}\left\{\prod_{(i_1,i_2,t)\in\alpha}\chi_\alpha(\mathbf A_t(i_1,i_2))\big|\bsigma,\btau\right\}\\
            &= \prod_{(i_1,i_2,t)\in\alpha}\left\{\left(\frac{\rho_n/2}{\sqrt{\rho_n(1-\rho_n)}}\cdot (-1)^{\bsigma(i_1)+\bsigma(i_2)+\btau(t)}\right)\right\}\\
            &= \left(\frac{\rho_n/2}{\sqrt{\rho_n(1-\rho_n)}}\right)^{|\alpha|}\cdot (-1)^{\sum_{(i_1,i_2,t)\in\alpha}\left\{\bsigma(i_1)+\bsigma(i_2)+\btau(t)\right\}}.
        \end{aligned}
    \end{equation}
    Given the index set $\alpha$, for each $i\in[n]$ and $t\in[T_n]$, define $u_{i,\alpha} \coloneqq \sum_{(i_1^\prime,i_2^\prime,t^\prime)\in\alpha}\mathbf {1}_{i\in\{i_1^\prime,i_2^\prime\}}$ and $v_{t,\alpha} \coloneqq \sum_{(i_1^\prime,i_2^\prime,t^\prime)\in\alpha}\mathbf {1}_{t = t^\prime}$. Namely, $u_{i,\alpha}$ is the number of times that $i$ appears in $\alpha$ and $v_{t,\alpha}$ is the number times that $t$ appears in $\alpha$. Using this notation, we have $\sum_{(i_1,i_2,t)\in\alpha}\left\{\bsigma(i_1)+\bsigma(i_2)+\btau(t)\right\} \equiv \sum_{i\in[n],2\nmid u_{i,\alpha}}\bsigma(i)+\sum_{t\in[T_n],2\nmid v_{t,\alpha}}\btau(t) \Mod{2}$. Then we have
    \begin{equation}\label{eq7}
        \mathbb{E}_{P_{1,n}}\left\{\chi_\alpha(\mathbf A)|\bsigma,\btau\right\}= \left(\frac{\rho_n/2}{\sqrt{\rho_n(1-\rho_n)}}\right)^{|\alpha|}\cdot (-1)^{\sum_{i\in[n],2\nmid u_{i,\alpha}}\bsigma(i)+\sum_{t\in[T_n],2\nmid v_{t,\alpha}}\btau(t)}.
    \end{equation}
    Define $\kappa \coloneqq (\rho_n/2)/\sqrt{\rho_n(1-\rho_n)}$ and recall that $\mathbb{E}_{P_{1,n}}\chi_\alpha(\mathbf A) = \mathbb{E}_{\bsigma,\btau}\mathbb{E}\{\chi_{\alpha}(\mathbf A)|\bsigma,\btau\}$, we calculate $\mathbb{E}_{P_{1,n}}\chi_\alpha(\mathbf A)$ as follows,
    \begin{equation}\label{eq8}
        \begin{aligned}
            \mathbb{E}_{P_{1,n}}\chi_\alpha(\mathbf A) &= \frac{1}{\binom{n}{n/2}\binom{T_n}{T_n/2}}\sum_{\bsigma\in\mathcal S_n,\btau\in\mathcal S_{T_n}}\mathbb{E}\{\chi_{\alpha}(\mathbf A)|\bsigma,\btau\}\\
            &= \frac{\kappa^{|\alpha|}}{\binom{n}{n/2}\binom{T_n}{T_n/2}}\sum_{\bsigma\in\mathcal S_n,\btau\in\mathcal S_{T_n}}(-1)^{\sum_{i\in[n],2\nmid u_{i,\alpha}}\bsigma(i)+\sum_{t\in[T_n],2\nmid v_{t,\alpha}}\btau(t)}\\
            &= \frac{\kappa^{|\alpha|}}{\binom{n}{n/2}\binom{T_n}{T_n/2}}\sum_{\bsigma\in\mathcal S_n}(-1)^{\sum_{i\in[n],2\nmid u_{i,\alpha}}\bsigma(i)}\cdot\sum_{\btau\in\mathcal S_{T_n}}(-1)^{\sum_{t\in[T_n],2\nmid v_{t,\alpha}}\btau(t)}.
        \end{aligned}
    \end{equation}
    For a given $\alpha$, recall the definition $U_\alpha\coloneqq\{i\in[n]:2\nmid u_{i,\alpha}\}$ and $V_\alpha\coloneqq\{t\in[T_n]:2\nmid v_{t,\alpha}\}$. We can rewrite \eqref{eq8} as follows, 
    \[\mathbb{E}_{P_{1,n}}\chi_\alpha(\mathbf A)=\frac{\kappa^{|\alpha|}}{\binom{n}{n/2}\binom{T_n}{T_n/2}}\sum_{\bsigma\in\mathcal S_n}(-1)^{\sum_{i\in U_\alpha}\bsigma(i)}\cdot\sum_{\btau\in\mathcal S_{T_n}}(-1)^{\sum_{t\in V_\alpha}\btau(t)}.\]

    Let $V_{\alpha,1}=\{t\in V_{\alpha}: \btau(t)=1\}$, and $|V_{\alpha,1}|=r\in[0,|V_\alpha|]$.
    Then $(-1)^{\sum_{t\in V_\alpha}\btau(t)}=(-1)^r$.  For a fixed $r\in[0,|V_\alpha|]$, the number of $\btau\in\mathcal S_{T_n}$ with $|V_{\alpha,1}|=r$ equals ${|V_\alpha|\choose r}{T_--|V_\alpha|\choose T_n/2-r}$. Thus we have 
    \begin{equation}\label{eq10}
            \sum_{\btau\in\mathcal S_{T_n}}(-1)^{\sum_{t\in V_\alpha}\btau(t)} =
            \sum_{r = 0}^{|V_\alpha|}(-1)^r\binom{|V_\alpha|}{r}\binom{T_n-|V_\alpha|}{T_n/2-r}\,.
    \end{equation}
    Similarly, we decompose $\sum_{\bsigma\in\mathcal n}(-1)^{\sum_{i\in U_\alpha}\bsigma(i)}$ by the following equation:
    \begin{equation}\label{eq11}
        \begin{aligned}
            \sum_{\bsigma\in\mathcal S_n}(-1)^{\sum_{i\in U_\alpha}\bsigma(i)} = \sum_{s = 0}^{|U_\alpha|}(-1)^s\binom{|U_\alpha|}{s}\binom{n-|U_\alpha|}{n/2-s}.
        \end{aligned}
    \end{equation}
    Combining \eqref{eq10} and \eqref{eq11}, we calculate $ \mathbb{E}_{P_{1,n}}\chi_\alpha(\mathbf A)$ as follows:
        \begin{align}
            &\mathbb{E}_{P_{1,n}}\chi_\alpha(\mathbf A)\nonumber\\
            =&\frac{\kappa^{|\alpha|}}{\binom{n}{n/2}\binom{T_n}{T_n/2}}\sum_{s = 0}^{|U_\alpha|}(-1)^s\binom{|U_\alpha|}{s}\binom{n-|U_\alpha|}{n/2-s}\cdot\sum_{r = 0}^{|V_\alpha|}(-1)^r\binom{|V_\alpha|}{r}\binom{T_n-|V_\alpha|}{T_n/2-r}\nonumber\\
            \stackrel{\displaystyle (a)}{=} & \frac{\kappa^{|\alpha|}}{\binom{n}{|U_\alpha|}\binom{T_n}{|V_\alpha|}}\sum_{s = 0}^{|U_\alpha|}(-1)^s\binom{n/2}{s}\binom{n/2}{|U_\alpha|-s}\cdot\sum_{r = 0}^{|V_\alpha|}(-1)^r\binom{T_n/2}{r}\binom{T_n/2}{|V_\alpha|-r}\,,\label{eq12}
        \end{align}
    where $\displaystyle (a)$ is guaranteed by the fact
    \[\frac{\binom{|U_\alpha|}{s}\binom{n-|U_\alpha|}{n/2-s}}{\binom{n}{n/2}} = \frac{\frac{(n/2)!}{s!(n/2-s)!}\cdot\frac{(n/2)!}{(|U_\alpha|-s)!(n/2+s-|U_\alpha|)!}}{\frac{n!}{|U_\alpha|!(n-|U_\alpha|)!}} = \frac{\binom{n/2}{s}\binom{n/2}{|U_\alpha|-s}}{\binom{n}{|U_\alpha|}}.\]
    Now we propose the following useful lemma to calculate \eqref{eq12}, we refer the reader to Appendix \ref{pl2} for a complete proof.
 
    According to Lemma \ref{l2}, $\mathbb{E}_{P_{1,n}}\chi_\alpha(\mathbf A) \neq 0$ only if $2\mid |U_\alpha|$ and $2\mid |V_\alpha|$,  and in this case we have
    \begin{equation}\label{9}
        \mathbb{E}_{P_{1,n}}\chi_\alpha(\mathbf A) = \frac{\kappa^{|\alpha|}(-1)^{|U_\alpha|/2+|V_\alpha|/2}\binom{n/2}{|U_\alpha|/2}\binom{T_n/2}{|V_\alpha|/2}}{\binom{n}{|U_\alpha|}\binom{T_n}{|V_\alpha|}}\,.
    \end{equation}
    Plugging \eqref{9} into \eqref{eq12} and further into \eqref{eq6}, we obtain the following expression for $\lVert L_n^{\leq D_n}(\mathbf A)-1\rVert^2$:
        \begin{align}
            \lVert L_n^{\leq D_n}(\mathbf A)-1\rVert^2 &= \sum_{1\leq|\alpha|\leq D_n}(\mathbb{E}_{P_{1,n}}\chi_\alpha(\mathbf A))^2 = 
            \sum_{a = 1}^{D_n}\sum_{\alpha\subseteq\Lambda_n,|\alpha| = a}(\mathbb{E}_{P_{1,n}}\chi_\alpha(\mathbf A))^2\nonumber\\
            &= \sum_{a = 1}^{D_n}\kappa^{2a}\sum_{\substack{|\alpha| = a\\2\mid |U_\alpha|, 2\mid |V_\alpha|}}\frac{\binom{n/2}{|U_\alpha|/2}^2\binom{T_n/2}{|V_\alpha|/2}^2}{\binom{n}{|U_\alpha|}^2\binom{T_n}{|V_\alpha|}^2}\,.\label{10}
        \end{align}

    Recall the definition $\Lambda_{n,a,r,k} \coloneqq \{\alpha\subseteq\Lambda_n: |\alpha| = a,|U_\alpha| = 2r,|V_{\alpha}| = 2k\}$.  Using this notation, \eqref{10} can be rewritten as follows:
    \begin{equation*}
        \lVert L_n^{\leq D_n}(\mathbf A)-1\rVert^2
        = \sum_{a = 1}^{D_n}\kappa^{2a}\sum_{r = 0}^{a}\sum_{k = 0}^{[a/2]}|\Lambda_{n,a,r,k}|\frac{\binom{n/2}{r}^2\binom{T_n/2}{k}^2}{\binom{n}{2r}^2\binom{T_n}{2k}^2}\,. \qedhere
    \end{equation*}
\end{proof}

\begin{proof}[Proof of \Cref{lem:Lambda}]
In this proof, we write $T$ for $T_n$ to simplify the notation.

To begin with, define $\Omega_{n,a,r,k} \coloneqq \{\alpha\subseteq[n]\times[n]\times[T_n]: |\alpha| = a,|U_\alpha| = 2r,|V_{\alpha}| = 2k\}$, where  the definition of $U_\alpha$ and $V_\alpha$ is the same as in the definition of $\Lambda_{n,a,r,k}$. We remark that the only difference between $\Omega_{n,a,r,k}$ and $\Lambda_{n,a,r,k}$ is that for any $\alpha\in\Omega_{n,a,r,k}$ and any tuple $(i_1,i_2,t)\in\alpha$, we do not require $i_1<i_2$. Therefore,
    \[|\Lambda_{n,a,r,k}|\leq |\Omega_{n,a,r,k}|.\]

In the remaining of this proof, we bound $|\Omega_{n,a,r,k}|$.

For an arbitrary $\alpha\subseteq[n]\times [n]\times [T]$, define
$x_\alpha(i)=\sum_{(i_1,i_2,t)\in\alpha}\left(\mathds{1}(i_1=i)+\mathds{1}(i_2=i)\right)$, i.e., the total number of times $i$ appears in $\alpha$.  Similarly, define $y_\alpha(t)=\sum_{(i_1,i_2,t_1)\in\alpha}\mathds{1}(t=t_1)$.

Recall that a multi-set is a set that keeps track of the multiplicities of each element. For example, a multiple set $\{1,1,2\}$ is the same as $\{1,2,1\}$ but different from $\{1,2\}$ and $\{1,2,2\}$.

Let $\mathcal I_{a,r}$ be the collection of all multi-sets $\beta\subseteq [n]\times [n]$ such that (i) there are exactly $2r$ $x_\beta(i)$'s being odd,
and (ii) $|\beta|=a$,
where $x_\beta(i)$ is defined similarly as $x_\alpha(i)$.

Let $\mathcal T_{a,k}$ be the collection  of all multi-sets $\gamma\subseteq [T]$ such that
(i) there are exactly $2k$ $y_\gamma(t)$'s being odd,
  and (ii) $|\gamma|=a$.

For each $\alpha\in\Omega_{a,r,k}$. Write $\alpha=\{(i_1,j_1,t_1),...,(i_a,j_a,t_a)\}$. Define 
$\beta=\{(i_1,j_1),...,(i_a,j_a)\}$ and $\gamma=\{t_1,...,t_n\}$. Then $\beta\in \mathcal I_{a,r}$ and $\gamma\in\mathcal T_{a,k}$.
This defines a mapping $h:\Omega_{a,r,k}\mapsto \mathcal I_{a,r}\times\mathcal T_{a,k}$.
Given $\beta\in\mathcal I_{a,r}$ and $\gamma\in\mathcal T_{a,k}$, there are at most $a!$ ways to pair them, and the claimed result follows.  So $|h^{-1}(\beta,\gamma)|\le a!$ for each $(\beta,\gamma)\in \mathcal I_{a,r}\times\mathcal T_{a,r}$. Thus we arrive at the following
\begin{equation}\label{eq:Omega < I times T}
|\Omega_{n,a,r,k}|\le |\mathcal I_{a,r}|\times|\mathcal T_{a,k}|\times a!\,.\end{equation}

\textbf{Bounding $|\mathcal I_{a,r}|$.}
Now let $\mathcal J_{a,r}$ be the collection of multi-sets consisting of elements in $[n]$ such that for each $\delta\in \mathcal J_{a,r}$, (i) $|\delta|=2a$, and (ii)
 there are exactly $2r$ $x_\delta(i)$'s being odd, where $x_\delta(i)=\sum_{j\in \delta}\mathds{1}(i=j)$.

Each $\beta=\{(i_1,j_1),...,(i_a,j_a)\}\in \mathcal I_{a,r}$ maps to a $\delta=\{i_1,j_1,...,i_a,j_a\}\in \mathcal J_{a,r}$.  For $\delta\in\mathcal J_{a,r}$, the inverse mapping is pairing up the elements of $\delta$, and there are at most ${2a\choose a}$ possible ways. So we have
\begin{equation}\label{eq:I < J}
|\mathcal I_{a,r}|\le |\mathcal J_{a,r}|{2a\choose a}\,.    
\end{equation}

Now we decompose $\mathcal J_{a,r}=\bigsqcup_{u=0}^{a-r}\mathcal J_{a,r,u}$, where $\mathcal J_{a,r,u}$ consists of elements in $\mathcal J_{a,r}$ in which the odd indices appear a total of $2r+2u$ times. 
We construct an element $\delta$ in $\mathcal J_{a,r,u}$ in three steps. In order to specify an element $\delta\in\mathcal J_{a,r}$, it is equivalent to specify $x_\delta(i)$ for each $i\in[n]$. First, we pick $2r$ indices in $[n]$ for which $x_\delta(i)$ is odd.  There are 
${n\choose 2r}$ ways for this step.  Second, we specify $x_\delta(i)$ for these $2r$ indices.  
This corresponds to solving the equation $z_1+...+z_{2r}=u$ over non-negative integers $z_1,...z_{
2r}$.  The number of
ways of doing this is ${u+2r-1\choose u}$.  Third, we need to specify $x_\delta(i)$ for the 
remaining $n-2r$ indices.  This corresponds to solving $z_1+...+z_{n-2r}=a-r-u$.  There are ${n+a-
3r-u-1\choose a-r-u}$ different ways.  So we arrive at
\begin{align}
|\mathcal I_{a,r}|\le & {2a\choose a} |\mathcal J_{a,r}|\nonumber\\
 \le &{2a\choose a}{n\choose 2r}\sum_{u=0}^{a-r}{u+2r-1\choose u}{n+a-3r-u-1\choose a-r-u}\label{eq:J_a,r,u-decomp}\\
 \le & 2 {2a\choose a}(n+a-3r-1)^{a-r}\nonumber\\
 \le & (1+o(1))2{2a\choose a}(n+a-1)^{a-r}\,,\label{eq:I_a,r}
\end{align}
where the third inequality follows by the fact that when $a/n\le D_n/n$ is sufficiently small, the summands in the RHS are upper bounded by a geometric sequence whose decay rate is at least $1/2$ so that the sum is bounded by twice of the first term.



Now we turn to $|\mathcal T_{a,k}|$.
Let $a_T=\min(a, T)$. For the same decomposition and three-step construction for $\mathcal J_{a,r,u}$ we have
\begin{align}
|\mathcal T_{a,k}|\le & \sum_{v=0}^{a_T/2-k}  {T\choose 2k}{2k+v-1\choose v}{T+a_T/2-3k-v-1\choose a_T/2-k-v}\,. \label{eq:|T_a,k|-initial}
\end{align}
If $T\gg a^2$, then $a_T=a$ and the same argument as in \eqref{eq:I_a,r} leads to
\begin{equation}\label{eq:|T_a,k|-large-T}
|\mathcal T_{a,k}|\le 2{T\choose 2k}(T+a/2-3k-1)^{a/2-k}\,.
\end{equation}

In general, \eqref{eq:|T_a,k|-initial} can be bounded by
\begin{align}
    |\mathcal T_{a,k}|\le & {T\choose 2k}(a_T/2-k){a_T/2+k-1\choose a_T/2-k}{T+a_T/2-3k-1\choose a_T/2-k}\nonumber\\
    \le & {T\choose 2k}(a_T/2-k)(a_T/2+k-1)^{(a_T/2-k)\wedge (2k-1)}{T+a_T/2-3k-1\choose a_T/2-k}\nonumber\\
    \le & {T\choose 2k}(a_T/2-k)(a_T/2+k-1)^{a_T/3-1}{T+a_T/2-3k-1\choose a_T/2-k}\nonumber\\
    \le & {T\choose 2k}a_T^{a_T/3}(T+a_T/2-1)^{a_T/2-k}\label{eq:|T_a,k|-general}\,.
\end{align}

Plugging \eqref{eq:I_a,r} and \eqref{eq:|T_a,k|-general} into \eqref{eq:Omega < I times T}, we have
arrive at the bound
\begin{align*}
    |\Omega_{n,a,r,k}| \le &(1+o(1))2\frac{(2a)!}{a!}a_T^{a_T/3}{n\choose 2r}{T\choose 2k}(n+a-1)^{a-r}(T+a_T/2-1) ^{a_T/2-k}\\
    \le &2^{1+a}a^{4a/3}{n\choose 2r}{T\choose 2k}(n+a-1)^{a-r}(T+a_T/2-1) ^{a_T/2-k}\\
    \le &2^{1+5a/2}a^{4a/3}{n\choose 2r}{T\choose 2k}n^{a-r}T^{a/2-k}\,,
\end{align*}
where the last inequality uses $n+a-1\le 2n$ and $T+a_T/2-1\le 2T$.

When $T\gg a^2$, the bound strengthens to
\begin{align*}
&|\Omega_{n,a,r,k}|\le 2^{1+5a/2}a^a{n\choose 2r}{T\choose 2k}n^{a-r}T^{a/2-k}\,.\qedhere
\end{align*}
\end{proof}

   \begin{lemma}\label{l2} For any positive integer $m$ and $0\leq k\leq m$, the following equation holds:
        \[\sum_{i = 0}^k(-1)^i\binom{m}{i}\binom{m}{k-i} = \left\{
        \begin{aligned}
            &0,~~&\text{if}~~2\nmid k; \\
            &(-1)^{k/2}\binom{m}{k/2},~~&\text{if}~~2\mid k.\\  
        \end{aligned}
        \right.\]
    \end{lemma}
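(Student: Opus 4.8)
The plan is to prove Lemma~\ref{l2} as an identity between the coefficients of a polynomial written in two different ways. Consider the factorization
\[
(1-x)^m (1+x)^m = (1-x^2)^m
\]
in the indeterminate $x$. Expanding the two factors on the left by the binomial theorem, $(1-x)^m = \sum_{i=0}^m (-1)^i \binom{m}{i} x^i$ and $(1+x)^m = \sum_{j=0}^m \binom{m}{j} x^j$, so for $0\le k\le m$ the coefficient of $x^k$ in the product is precisely $\sum_{i=0}^k (-1)^i \binom{m}{i}\binom{m}{k-i}$, the left-hand side of the claimed identity.

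On the other hand, expanding the right-hand side gives $(1-x^2)^m = \sum_{\ell=0}^m (-1)^\ell \binom{m}{\ell} x^{2\ell}$, which contains only even powers of $x$. Hence the coefficient of $x^k$ on the right is $0$ when $k$ is odd, and equals $(-1)^{k/2}\binom{m}{k/2}$ when $k$ is even (take $\ell=k/2$, which is a legitimate index since $k\le m$ forces $k/2\le m$). Equating the coefficients of $x^k$ on the two sides of the factorization yields exactly the two cases of the lemma.

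There is essentially no obstacle here: the only point requiring a word of justification is the range constraint $k/2\le m$ in the even case, which is immediate. I would present the generating-function argument above as it is the shortest; for completeness one could alternatively give a direct combinatorial proof via a sign-reversing involution on ordered pairs of subsets of $[m]$ whose sizes sum to $k$, or an induction on $k$ using Pascal's rule, but these are longer and offer no additional insight.
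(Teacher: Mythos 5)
Your proof is correct and is essentially identical to the paper's: both expand $(1-x)^m(1+x)^m=(1-x^2)^m$ and equate the coefficient of $x^k$ on the two sides. Nothing further is needed.
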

\begin{proof}[Proof of Lemma \ref{l2}]
    For any positive integer $m$ and $0\leq k\leq m$, we consider the following polynomial:
\[g(x) = (1-x)^m(1+x)^m.\]
We compute the coefficient of the $x^k$ term in $g(x)$, where we leverage the following form,
\[g(x) = \left(\sum_{i = 0}^m\binom{m}{i}(-1)^ix^i\right)\left(\sum_{i = 0}^m\binom{m}{i}x^i\right).\]
Therefore, the coefficient for $x^k$ is $\sum_{i = 0}^k(-1)^i\binom{m}{i}\binom{m}{k-i}$. 

Besides, we remark that $g(x) = (1-x^2)^m$, which further implies that
\[g(x) = \sum_{i = 0}^m\binom{m}{i}(-1)^ix^{2i}.\]
Thus, the coefficient of $x^k$ is zero if $2\nmid k$, and the coefficient is $(-1)^{k/2}\binom{m}{k/2}$ if $2\mid k$.
\end{proof}

\subsection*{Details for \eqref{eq:final_bound_comp_low}}
\begin{proof}
Combining \Cref{lem:re-exp} and \Cref{lem:Lambda} we get
\begin{align}
&\lVert L_n^{\leq D_n}(\mathbf A)-1\rVert^2\nonumber\\
        \le&\sum_{a = 1}^{D_n}\frac{\rho_n^a}{(4(1-\rho_n))^a}2^{1+5a/2}a^{4a/3}\sum_{r = 0}^{a}\sum_{k = 0}^{a/2}\frac{\binom{n/2}{r}^2\binom{T_n/2}{k}^2}{\binom{n}{2r}\binom{T_n}{2k}}n^{a-r}T_n^{a/2-k}\nonumber\\
        =&\sum_{a = 1}^{D_n}\frac{\rho_n^a}{(4(1-\rho_n))^a}2^{1+5a/2}a^{4a/3}\left[\sum_{r = 0}^{a}\frac{\binom{n/2}{r}^2}{\binom{n}{2r}}n^{a-r}\right]\left[\sum_{k = 0}^{a/2}\frac{\binom{T_n/2}{k}^2}{\binom{T_n}{2k}}T_n^{a/2-k}\right]\,.\label{eq:final_detail_comp_lower}
\end{align}
It suffices to provide upper bounds for the two inner sums.

For $r\le a\le D_n = o(n)$, the terms in the sequence
$$
\frac{{n/2\choose r}^2}{{n\choose 2r}}n^{a-r}\,,\quad r=0,...,a
$$
are bounded by a geometric sequence with a decay rate of at least $1/2$ by Stirling's formula. Therefore their sum is bounded by twice the first term and we get
\begin{align}
  \sum_{r = 0}^{a}\frac{\binom{n/2}{r}^2}{\binom{n}{2r}}n^{a-r}\le 2 n^a\,.\label{eq:detail_sum_r}
\end{align}

By Stirling's formula, when $1\le k\le T/2-1$, we have
$$
\frac{{T/2\choose k}^2}{{T\choose 2k}}\le c T^{1/2}
$$
for some universal constant $c$. Therefore
\begin{align}
&\sum_{k = 0}^{a/2}\frac{\binom{T_n/2}{k}^2}{\binom{T_n}{2k}}T^{a/2-k}
\le T_n^{a/2}+1+c T_n^{1/2}\sum_{k = 1}^{(a/2)\wedge (T_n/2-1)}T_n^{a/2-k}\nonumber\\
\le& T_n^{a/2}+1+2c T_n^{1/2}T_n^{a/2-1}
\le 2 T_n^{a/2}\,.\label{eq:detail_sum_k}
\end{align}
Plugging \eqref{eq:detail_sum_r} and \eqref{eq:detail_sum_k} into \eqref{eq:final_detail_comp_lower} leads to the desired result.
\end{proof}

\section{Proofs for information-theoretic bounds}\label{app:proof_info}
\begin{proof}[Proof of \Cref{t3}]
First we expand the $\chi^2$ divergence between $P_{1,\btau,n}$ and $P_{0,n}$.
    \begin{equation}\label{eq21}
        \begin{aligned}
            &\sum_{\mathbf{A}}\frac{P_{1,\btau,n}^2(\mathbf{A})}{P_{0,n}(\mathbf{A})}\\
            = & \frac{1}{\binom{n}{n/2}^2}\sum_{\mathbf{A}}\frac{\sum_{\bsigma_1,\bsigma_2}P_{\bsigma_1,\btau}(\mathbf{A})P_{\bsigma_2,\btau}(\mathbf{A})}{P_{0,n}(\mathbf{A})} = \frac{1}{\binom{n}{n/2}^2}\sum_{\bsigma_1}\sum_{\bsigma_2}\sum_{\mathbf{A}}\frac{P_{\bsigma_1,\btau}(\mathbf{A})P_{\bsigma_2,\btau}(\mathbf{A})}{P_{0,n}(\mathcal{\mathbf{A}})}\\
            = & \frac{1}{\binom{n}{n/2}^2}\sum_{\bsigma_1,\bsigma_2}\prod_{1\le i<j\le n,t\in[T_n]}\left(\frac{P_{\bsigma_1,\btau}(\mathbf{A}_t(i,j) = 1)P_{\bsigma_2,\btau}(\mathbf{A}_t(i,j) = 1)}{P_{0,n}(\mathbf{A}_t(i,j) = 1)}\right.\\
            &\qquad\qquad\qquad\qquad\qquad\qquad \left.+\frac{P_{\bsigma_1,\btau}(\mathbf{A}_t(i,j) = 0)P_{\bsigma_2,\btau}(\mathbf{A}_t(i,j) = 0)}{P_{0,n}(\mathbf{A}_t(i,j) = 0)}\right).
        \end{aligned}
    \end{equation}
    Since each membership $\bsigma\in\mathcal S_n$ has $n/2$ terms as $0$ and $n/2$ as $1$, we can fix $\bsigma_1$ and consider the sum for all $\bsigma_2$. For $\bsigma_1$ and any $\bsigma_2$, define sets 
\begin{align*}
    \Omega_1 = & \{(i,j,t):1\le i<j\le n,t\in[T],2\nmid \bsigma_1(i)+\bsigma_1(j)+\btau(t),2\nmid \bsigma_2(i)+\bsigma_2(j)+\btau(t)\}\\
    \Omega_2 = &\{(i,j,t):1\le i<j\le n,t\in[T],2\nmid \bsigma_1(i)+\bsigma_1(j)+\btau(t),2\mid \bsigma_2(i)+\bsigma_2(j)+\btau(t)\}\\
    \Omega_3 = &\{(i,j,t):1\le i<j\le n,t\in[T],2\mid \bsigma_1(i)+\bsigma_1(j)+\btau(t),2\nmid \bsigma_2(i)+\bsigma_2(j)+\btau(t)\}\\
\Omega_4 = & \{(i,j,t):1\le i<j\le n,t\in[T],2\mid \bsigma_1(i)+\bsigma_1(j)+\btau(t),2\mid \bsigma_2(i)+\bsigma_2(j)+\btau(t)\}
\end{align*}
    Let $c=|\bsigma_1^{-1}(1)\cap \bsigma_2^{-1}(1)|$. The sizes of $\Omega_j,j\in[4]$ satisfy:
    \begin{equation}
        \begin{aligned}
            |\Omega_1|+|\Omega_4| &=(2c^2+2(n/2-c)^2-n/2)T_n,\\
            |\Omega_2|+|\Omega_3| &= 4T_n(c(n/2-c))\,.
        \end{aligned}
    \end{equation}
The RHS of \eqref{eq21} can be rewritten by combining the same terms according to the membership of the triplet $(i,j,t)$ in $\Omega_k$ ($1\le k\le 4$):
        \begin{align}
            &\sum_{\mathbf{A}}\frac{P_{1,\btau,n}^2(\mathbf{A})}{P_{0,n}(\mathbf{A})}\nonumber\\
            =& \frac{1}{\binom{n}{n/2}^2}\sum_{\bsigma_1}\sum_{\bsigma_2}\left(\frac{3\rho_n/2\cdot 3\rho_n/2}{\rho_n}+\frac{(1-3\rho_n/2)\cdot(1-3\rho_n/2)}{1-\rho_n}\right)^{|\Omega_1|}\nonumber\\
            &\quad \times\left(\frac{3\rho_n/2\cdot \rho_n/2}{\rho_n}+\frac{(1-3\rho_n/2)\cdot(1-\rho_n/2)}{1-\rho_n}\right)^{|\Omega_2|}\nonumber\\
            &\quad \times\left(\frac{\rho_n/2\cdot 3\rho_n/2}{\rho_n}+\frac{(1-\rho_n/2)\cdot(1-3\rho_n/2)}{1-\rho_n}\right)^{|\Omega_3|}\nonumber\\
            &\quad \times \left(\frac{\rho_n/2\cdot \rho_n/2}{\rho_n}+\frac{(1-\rho_n/2)\cdot(1-\rho_n/2)}{1-\rho_n}\right)^{|\Omega_4|}\nonumber\\
            = & \frac{1}{\binom{n}{n/2}^2}\sum_{\bsigma_1}\sum_{\bsigma_2}\left(\frac{1-3\rho_n/4}{1-\rho_n}\right)^{|\Omega_1|+|\Omega_4|}\left(\frac{1-5\rho_n/4}{1-\rho_n}\right)^{|\Omega_2|+|\Omega_3|}\nonumber\\
            = & \frac{1}{\binom{n}{n/2}}\sum_{c = 0}^{n/2}\binom{n/2}{c}^2\left(\frac{1-3\rho_n/4}{1-\rho_n}\right)^{2T_n(c^2+(n/2-c)^2-n/4)}\left(\frac{1-5\rho_n/4}{1-\rho_n}\right)^{4T_n(c(n/2-c))}. \label{eq:simplified_P(A)}
        \end{align}
    Observe that $2T_n(c^2+(n/2-c)^2-n/4)+4T_n(c(n/2-c)) = {n\choose 2}T_n$, and hence
        \begin{align}
            &\sum_{\mathbf{A}}\frac{P_{1,\btau,n}^2(\mathbf{A})}{P_{0,n}(\mathbf{A})}\nonumber\\
            =& \frac{1}{\binom{n}{n/2}}\left(\frac{(1-3\rho_n/4)(1-5\rho_n/4)}{(1-\rho_n)^2}\right)^{{n\choose 2}T_n/2}\nonumber\\
            &\quad\times\sum_{c = 0}^{n/2}\binom{n/2}{c}^2\left(\frac{1-3\rho_n/4}{1-5\rho_n/4}\right)^{2T_n(c^2+(n/2-c)^2-n/4)-{n\choose 2}T_n/2}\nonumber\\
           \le & \frac{1}{\binom{n}{n/2}}\left(\frac{(1-3\rho_n/4)(1-5\rho_n/4)}{(1-\rho_n)^2}\right)^{{n\choose 2}T_n/2}\nonumber\\
            &\quad\times\sum_{c = 0}^{n/2}\binom{n/2}{c}^2\left(\frac{1-3\rho_n/4}{1-5\rho_n/4}\right)^{2T_n(c-n/4)^2}.\label{eq24}
        \end{align}
    Whenever $\rho_n= o((1/(n\sqrt{T_n}))$ and $n,T_n\rightarrow\infty$, we have
    \[\left(\frac{(1-3\rho_n/4)(1-5\rho_n/4)}{(1-\rho_n)^2}\right)^{{n\choose 2}T_n/2} = \left(1-\frac{\rho_n^2/16}{(1-\rho_n)^2}\right)^{{n\choose 2}T_n/2} \rightarrow 1.\]
    Thus, we can drop the second term in the RHS of \eqref{eq24}. 
    
    Next, let $\kappa=\kappa_n$ be such that $\kappa_n\rightarrow\infty$ and $\kappa_n^2 nT_n\rho_n\rightarrow 0$. For example, under the condition $nT_n\rho_n\rightarrow 0$ we can pick $\kappa_n=(nT_n\rho_n)^{-1/3}$. Define $n_1=n/4-\kappa n^{1/2}$, $n_2=n/4+\kappa n^{1/2}$. 
    We decompose \eqref{eq24} into three terms:
    \begin{align*}
    &\sum_{\mathbf{A}}\frac{P_{1,\btau,n}^2(\mathbf{A})}{P_{0,n}(\mathbf{A})} 
    =(1+o(1))\times\\
    &\frac{1}{\binom{n}{n/2}}\bigg[\underbrace{\sum_{0\le c\le n_1}}_{\displaystyle\mathrm{(I)}}+\underbrace{\sum_{n_1< c <n_2}}_{\displaystyle\mathrm{(II)}}+\underbrace{\sum_{n_2\le c \le n/2}}_{\displaystyle\mathrm{(III)}}\bigg]\binom{n/2}{c}^2\left(\frac{1-3\rho_n/4}{1-5\rho_n/4}\right)^{2T_n(c-n/4)^2}.\end{align*}
    We remark that $\displaystyle\mathrm{(I)} = \displaystyle\mathrm{(III)}$ by the symmetry of combination numbers and only have to handle (I) and (II). Next, we are going to prove that
    \[\displaystyle{\mathrm{(I)}} = o(1)~~~~\text{and}~~~~\displaystyle{\mathrm{(II)}} \leq 1+o(1).\]
    \textbf{Bounding (II).} For any $c\in[n_1,n_2]$,
    \begin{align*}
    &\left(\frac{1-3\rho_n/4}{1-5\rho_n/4}\right)^{2T_n(c-n/4)^2}\leq\left(1+\frac{\rho_n/2}{1-5\rho_n/4}\right)^{2\kappa^2nT_n}\\
    \leq & (1+\rho_n)^{2\kappa^2 nT_n}\leq e^{2\kappa^2 nT_n\rho_n}\rightarrow 1,\end{align*}
    where we use the inequality $(1+x/y)^y\leq e^x$ for any $y>1,|x|\leq y$. Plugging in the above inequality, we have
    \[\displaystyle\mathrm{(II)} \leq \frac{1}{\binom{n}{n/2}}\sum_{c = n_1}^{n_2}\binom{n/2}{c}^2e^{2\kappa^2nT_n\rho_n} \leq 1+o(1).\]
    \textbf{Bounding (I).} We decompose (I) into two terms: for $d\in(0,1/4)$,
    \[\displaystyle{\mathrm{(I)}} = \frac{1}{\binom{n}{n/2}}\bigg[\underbrace{\sum_{c = 0}^{n/4-dn}}_{\displaystyle\mathrm{(IV)}}+\underbrace{\sum_{c = n/4-dn}^{n_1}}_{\displaystyle\mathrm{(V)}}\bigg]\binom{n/2}{c}^2\left(\frac{1-3\rho_n/4}{1-5\rho_n/4}\right)^{2T_n(c-n/4)^2}\,.\] 
    
    For (IV), we have
    \begin{equation}\label{eq25}
        \displaystyle{\mathrm{(IV)}}\leq \frac{\sum_{c = 0}^{n/4-dn}\binom{n/2}{c}^2}{\binom{n}{n/2}}\left(\frac{1-3\rho_n/4}{1-5\rho_n/4}\right)^{n^2T_n/8}.
    \end{equation}
    Observe that $\frac{\sum_{c = 0}^{n/4-dn}\binom{n/2}{c}^2}{\binom{n}{n/2}}$ is the probability of a hypergeometric-$(n,n/2,n/2)$ random variable being less than or equal to $n/4-dn$. 
Using Lemma \ref{l3} with $N = n$, $K = m = n/2$, we obtain
    \[\frac{\sum_{c = 0}^{n/4-dn}\binom{n/2}{c}^2}{\binom{n}{n/2}} = P({\rm HG}(n,n/2,n/2)\leq n/4-dn)\leq e^{-4d^2n}.\]
    Plugging this inequality into \eqref{eq25}, we have
    \[\displaystyle{\mathrm{(IV)}}\leq e^{-4d^2n}e^{\rho_n n^2 T_n} = e^{-n(4d^2-n T_n\rho_n)}.\]
    Thus, $\displaystyle{\mathrm{(IV)}}\rightarrow 0$ for any constant $d>0$ and $n,T_n\rightarrow\infty$.

    For (V), we have 
    \begin{equation}\label{eq26}
        \begin{aligned}
            \displaystyle{\mathrm{(V)}} &=  \frac{1}{\binom{n}{n/2}}\sum_{c = n/4-dn}^{n_1}\binom{n/2}{c}^2\left(\frac{1-3\rho_n/4}{1-5\rho_n/4}\right)^{T_n[2(c-n/4)^2]}\\
            &= \frac{1}{\binom{n}{n/2}}\sum_{r = \kappa \sqrt{n}}^{dn}\binom{n/2}{n/4-r}^2\left(\frac{1-3\rho_n/4}{1-5\rho_n/4}\right)^{2r^2T_n}.
        \end{aligned}
    \end{equation}
    First, observe that
        $$\left(\frac{1-3\rho_n/4}{1-5\rho_n/4}\right)^{2r^2T_n} = \left(1+\frac{\rho_n/2}{1-5\rho_n/4}\right)^{2r^2T_n}\le e^{2r^2T_n\rho_n}\,,$$ 
    Using the Stirling formula, we have 
    $$\binom{n}{n/2}^{-1}\lesssim n^{1/2}\cdot 2^{-n}\,,$$
    and
    \begin{equation}\nonumber
        \begin{aligned}
           \binom{n/2}{n/4-r}^2 &= \left(\frac{(n/2)!}{(n/4-r)!(n/4+r)!}\right)^2\lesssim n^{-1}\cdot 2^n\left(\frac{n^2/16}{n^2/16-r^2}\right)^{n/2-2r}\left(\frac{n/4}{n/4+r}\right)^{4r}\\
           &= n^{-1}\cdot 2^n\left(1+\frac{1}{n^2/(16r^2)-1}\right)^{n/2-2r}\left(1-\frac{1}{n/(4r)+1}\right)^{4r}\\
           &\le n^{-1}\cdot 2^n\cdot e^{\frac{n/2-2r}{n^2/(16r^2)-1}}\cdot e^{-\frac{4r}{n/(4r)+1}}\\
           &=n^{-1}\cdot 2^n\cdot e^{-r^2 n^{-1}\left(\frac{2}{1/4+r/n}\right)}\\
           &\le n^{-1}\cdot 2^n\cdot e^{-4r^2 n^{-1}}\,,
        \end{aligned}
    \end{equation}
    where the last inequality follows from $r\le dn <n/4$.
    
    Plugging the above three inequalities into \eqref{eq26}, we obtain the following bound:
    \begin{align*}\displaystyle{\mathrm{(V)}} \lesssim  &n^{-1/2}\sum_{r = \kappa\sqrt{n}}^{dn}e^{-2r^2n^{-1}\left(2-nT_n\rho_n\right)}\leq n^{-1/2}\sum_{r = \kappa\sqrt{n}}^{dn}e^{-2r^2/n}\\
    \le & n^{-1/2}\int_{\kappa\sqrt{n}}^\infty e^{-2r^2/n}dr = \int_{\kappa}^\infty e^{-2s^2}ds\lesssim e^{-2\kappa^2}\,,
    \end{align*}
    provided that $nT_n\rho_n\le 1$ and $\kappa \ge 1$.
    In summary, we prove that $\displaystyle{\mathrm{(I)} = \mathrm{(IV)}+\mathrm{(V)}} \rightarrow 0$ and $\displaystyle{\mathrm{(II)}} \rightarrow 1$. These results together show that
    \[\sum_{\mathbf{A}}\frac{P_{1,\btau,n}^2(\mathbf{A})}{P_{0,n}(\mathbf{A})} = 2\cdot \displaystyle{\mathrm{(I)}}+\displaystyle{\mathrm{(II)}}\rightarrow 1. \qedhere\]
\end{proof}

    \begin{lemma}[Eq. (14) in \cite{skala2013hypergeometric}]\label{l3}
        Let $X\sim\text{Hypergeometric}(N,K,m)$ and $p = K/N$. Then for $0<t<mK/N$ we can derive the following bound:
        \[P(X\leq (p-t)m)\leq e^{-2t^2m}.\]
    \end{lemma}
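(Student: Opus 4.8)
The plan is to recognize Lemma~\ref{l3} as the Hoeffding-type tail bound for sampling \emph{without} replacement, and to prove it by reduction to the i.i.d.\ Bernoulli (with-replacement) case. Represent $X=\sum_{i=1}^m Z_i$, where $(Z_1,\dots,Z_m)$ is a uniformly random ordered sample without replacement from a population of $N$ items, $K$ of which are labelled $1$ and $N-K$ labelled $0$; then $X\sim\mathrm{Hypergeometric}(N,K,m)$ and each $Z_i$ is marginally $\mathrm{Bernoulli}(p)$ with $p=K/N$. Let $(Y_1,\dots,Y_m)$ be i.i.d.\ $\mathrm{Bernoulli}(p)$ (the corresponding with-replacement sample) and $S=\sum_{i=1}^m Y_i\sim\mathrm{Binomial}(m,p)$. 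The structural fact I would invoke is the classical comparison inequality: for every convex $\phi\colon\mathbb R\to\mathbb R$ one has $\mathbb E\,\phi(X)\le\mathbb E\,\phi(S)$. Applying it with $\phi(x)=e^{-\lambda x}$ for $\lambda\ge 0$ gives the moment-generating-function domination $\mathbb E\,e^{-\lambda X}\le\mathbb E\,e^{-\lambda S}=(1-p+pe^{-\lambda})^m$.

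From here the argument is the textbook Chernoff--Hoeffding computation. For any $\lambda>0$ and real $a$, since $x\mapsto e^{-\lambda x}$ is decreasing,
\begin{equation*}
P(X\le a)=P\!\left(e^{-\lambda X}\ge e^{-\lambda a}\right)\le e^{\lambda a}\,\mathbb E\,e^{-\lambda X}\le e^{\lambda a}(1-p+pe^{-\lambda})^m .
\end{equation*}
Take $a=(p-t)m$ and use the Hoeffding-lemma estimate $\log(1-p+pe^{-\lambda})\le -p\lambda+\lambda^2/8$, valid for all $\lambda$ because the log-moment generating function of a $[0,1]$-valued variable has second derivative at most $1/4$. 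This yields
\begin{equation*}
P\bigl(X\le (p-t)m\bigr)\le \exp\!\Bigl(m\bigl[\lambda(p-t)-p\lambda+\tfrac{\lambda^2}{8}\bigr]\Bigr)=\exp\!\Bigl(m\bigl[-\lambda t+\tfrac{\lambda^2}{8}\bigr]\Bigr),
\end{equation*}
and minimizing over $\lambda>0$ (the minimizer is $\lambda=4t$) gives $P(X\le(p-t)m)\le e^{-2t^2m}$, as claimed. Note the optimization never uses $t<p$; the hypothesis in the lemma (which I read the stated $0<t<mK/N$ as intending to be $0<t<p$, so that the threshold $(p-t)m$ is nonnegative) only serves to make the event nontrivial.

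The one genuinely nontrivial ingredient is the convex-order domination $\mathbb E\,\phi(X)\le\mathbb E\,\phi(S)$, so that is where I would concentrate the effort, and there are two clean routes. The first is Hoeffding's original argument: express the without-replacement sum as an average over all $\binom{N}{m}$ subsets of sums that couple to partial sums of a with-replacement sample, then push convexity through Jensen's inequality on the conditional law; this is Theorem~4 and Section~6 of Hoeffding (1963) and can simply be cited. The second, more self-contained route is to use that the indicator variables of a sampling-without-replacement scheme are \emph{negatively associated} (Joag-Dev and Proschan); then for the non-negative, non-increasing functions $g_i(z)=e^{-\lambda z}$ one gets directly $\mathbb E\,e^{-\lambda X}=\mathbb E\prod_i g_i(Z_i)\le\prod_i\mathbb E\,g_i(Z_i)=(1-p+pe^{-\lambda})^m$, bypassing the general convex-ordering statement. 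Either way, once this domination is in place the rest is the routine Chernoff optimization above, so I do not expect further obstacles.
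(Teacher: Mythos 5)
The paper offers no proof of Lemma~\ref{l3}; it is quoted verbatim as Eq.~(14) of \cite{skala2013hypergeometric}, whose derivation is exactly the argument you give (Hoeffding's convex-order reduction of sampling without replacement to the binomial case, followed by the standard Chernoff--Hoeffding optimization with $\lambda=4t$). Your proof is correct and self-contained, and your reading of the hypothesis $0<t<mK/N$ as a typo for $0<t<K/N=p$ is the right one --- the condition only ensures the threshold $(p-t)m$ is nontrivial and plays no role in the bound itself.
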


\begin{proof}[Proof of \Cref{thm:info_upper}]
    To simplify notation, we denote 
    $$A=\bsigma^{-1}(0)\,,\quad B=[n]\backslash A\,,\quad C=\btau^{-1}(0)\,,\quad D=[T_n]\backslash C\,.$$

    For disjoint node set $X$ and layer set $Y$, 
    we use $e(X,X,Y)$ to denote the number of distinct edges between nodes in $X$ and in layers indexed by $Y$:
    $$
e(X,X,Y) = \sum_{i<j,(i,j,t)\in X^2\times Y} A_t(i,j)\,.
    $$
Let $X'$ be another node set disjoint with $X$, we use $e(X,X',Y)$ to denote the number of edges  between $X$ and $X'$ in layers indexed by $Y$:
$$
e(X,X',Y) = \sum_{(i,j,t)\in X\times X'\times Y} A_t(i,j)\,.
$$

With this notation, the MLE estimator in \eqref{eq:MLE} can be equivalently written as
$$
\arg\max\max_{A\subseteq[n]:|A|=n/2,C\subseteq[T_n]:|C|=T_n/2}
$$

Let 
\begin{align*}
A_w=&A\cap \hat\bsigma_{\rm mle}^{-1}(1)\,,\quad
B_w=B\cap \hat\bsigma_{\rm mle}^{-1}(0)\,,\\
C_w=&C\cap \hat\btau_{\rm mle}^{-1}(1)\,,\quad
D_w=D\cap \hat\btau_{\rm mle}^{-1}(0)\,.
\end{align*}
    In other words, $A_w$, $B_w$, $C_w$, and $D_w$ are the set of nodes or layers for which the MLE memberships are incorrect. 

Assume $d_{\rm Ham}(\hat{\bsigma}_{\rm mle},\bsigma)\le d_{\rm Ham}(1-\hat{\bsigma}_{\rm mle},\bsigma)$ (otherwise we will simply consider $1-\hat\bsigma_{\rm mle}$ instead).
Then we have $|A_w|=|B_w|=k\in[0,n/4]$, and $|C_w|=|D_w|=s\in[0,T_n/4]$.

Use the correspondence
\begin{align*}
\hat A=&\hat\bsigma^{-1}(0)=(A\backslash A_w) \cup B_w\,,\quad \hat B=\hat\bsigma^{-1}(1)=(B\backslash B_w) \cup A_w\,,\\
\hat C=&\hat\btau^{-1}(0)=(C\backslash C_w) \cup D_w\,,\quad \hat D=\hat\btau^{-1}(1)=(D\backslash D_w) \cup C_w\,.
\end{align*}

Then through the construction of MLE, we have
\begin{align*}
    e(\hat A,\hat A,\hat C)+e(\hat B,\hat B,\hat C)+e(\hat A,\hat B,\hat D)\ge e(A,A,C)+e(B,B,C)+e(A,B,D)\,,
\end{align*}
which is equivalent to
\begin{align}
&e(A\backslash A_w, A\backslash A_w, C_w)+e(A_w,A_w,C_w)+e(B\backslash B_w,B\backslash B_w,C_w)+e(B_w,B_w,C_w)\nonumber\\
&+e(A\backslash A_w, A_w, C\backslash C_w)+e(B\backslash B_w, B_w, C\backslash C_w)+e(A\backslash A_w, B_w,D\backslash D_w)\nonumber\\
&+e(B\backslash B_w, A_w, D\backslash D_w)+e(A\backslash A_w, B\backslash B_w, D_w)+e(A_w, B_w, D_w)\nonumber\\
\le &e(A\backslash A_w, A\backslash A_w, D_w)+e(A_w,A_w,D_w)+e(B\backslash B_w,B\backslash B_w,D_w)+e(B_w,B_w,D_w)\nonumber\\
&+e(A\backslash A_w, A_w, D\backslash D_w)+e(B\backslash B_w, B_w, D\backslash D_w)+e(A\backslash A_w, B_w,C\backslash C_w)\nonumber\\
&+e(B\backslash B_w, A_w, C\backslash C_w)+e(A\backslash A_w, B\backslash B_w, C_w)+e(A_w, B_w, C_w)\,.\label{eq:MLE_ineq}
\end{align}

All terms are sums of distinct independent Bernoulli random variables.  The Bernoulli parameters are $3\rho_n/2$ on the LHS and $\rho_n/2$ on the RHS. The number of terms on each side is the same, which equals
\begin{align*}
N_{k,s}:=&\left(\frac{n}{2}-k\right)\left(\frac{n}{2}-k-1\right)s+k(k-1)s+4\left(\frac{n}{2}-k\right)k\left(\frac{T_n}{2}-s\right)+\left(\frac{n}{2}-k\right)^2s+k^2s\\
\gtrsim & \left(\frac{n}{2}-k\right)^2s + \left(\frac{n}{2}-k\right)k \left(\frac{T_n}{2}-s\right)\\
\gtrsim & n^2s+nT_nk\,.
\end{align*}
Let $E_{k,s}$ denote the event that $|A_w|=k$, $|C_w|=s$. Then using the Bernstein's inequality for the event in \eqref{eq:MLE_ineq}, union bound over the choice of $A_w$, $B_w$, $C_w$, $D_w$, and the fact that $\log{n\choose m}\le m\log(n/m+1)$, we have, for some universal constant $c>0$
\begin{align*}
\mathbb P(E_{k,s})\le & {n/2\choose k}^2{T_n/2\choose s}^2 \exp\left[-c(n^2s+nT_nk)\rho_n\right]\\
\le &\exp\left[-c(n^2s+nT_nk)\rho_n-2 k\log(n/k+1)-2s\log(T_n/s+1)\right]\,.
\end{align*}
Because the function $\log(1+x)/x$ is monotone decreasing on $[1/4,\infty)$ with maximum value less than $1$, we have
$$
s\log(T_n/s+1)\le (T_n/4)\log(5)\le T_n/2\,.  
$$
Thus if $k\ge \epsilon n$ for some fixed constant $\epsilon>0$, we have
\begin{align*}
\mathbb P(E_{k,s})\le & \exp\left[-c(n^2s+nT_nk)\rho_n-2k\log (1+\epsilon^{-1})-T_n\right]\\
\le &\exp\left[-c n^2\rho_n s-(c/2) n T_n k\rho_n\right]
\end{align*}
whenever
$$
nT_n\rho_n\ge 8c^{-1}\log(1+\epsilon^{-1})\,,\quad\text{and } n^2\rho_n\ge 4/(c\epsilon)\,.
$$
Then we have the error bound, for large enough $n$,
\begin{align*}
    &P_{1,n}(\ell_n(\hat\bsigma_{\rm mle},\bsigma)\ge 2\epsilon)\\
    \le & \sum_{k\in[\epsilon n, n/4], s\in[0,T_n/4]}
    \exp\left[-c n^2\rho_n s-(c/2) n T_n k\rho_n\right]\\
    = &\sum_{k=\epsilon n}^{n/4}e^{-(c/2)n T_n\rho_n k} \sum_{s=0}^{T_n/4}e^{-cn^2\rho_n s}\\
    \le & \frac{2}{c nT_n\rho_n}\times \frac{1}{cn^2\rho_n} = o(1)\,,
\end{align*}
where the last inequality uses $\sum_{k=a}^b e^{-\beta k}\le \beta^{-1}\int_{a\beta}^\infty e^{-u}du\le \beta^{-1}e^{-a\beta}$.
\end{proof}


\end{document}